\newtheorem{defn}{Definition}[section]
\newtheorem{theo}[defn]{Theorem}
\newtheorem{lem}[defn]{Lemma}
\newtheorem{prop}[defn]{Proposition}
\newtheorem{cor}[defn]{Corollary}
\DeclareMathOperator{\cpct}{Cap}
\DeclareMathOperator{\ccpct}{cap}
\DeclareMathOperator{\lin}{span}
\begin{document}


\title[Capacities, removable sets and $L^p$-uniqueness]{Capacities, removable sets and $L^p$-uniqueness on Wiener spaces}

\author{Michael Hinz$^1$ \and Seunghyun Kang$^2$}

\address{$^1$Fakult\"at f\"ur Mathematik, Universit\"at Bielefeld, 
 33501 Bielefeld, Germany}
 \email{mhinz@math.uni-bielefeld.de}
 
 \address{$^2$Department of Mathematical Sciences, Seoul National University, GwanAkRo 1, 
 Gwanak-Gu, Seoul 08826, Korea }
\email{power4454@snu.ac.kr}

\thanks{$(^{1,2})$ Research supported by the DFG IRTG 2235: 'Searching for the regular in the irregular: Analysis of singular and random systems'.}

\begin{abstract}
We prove the equivalence of two different types of capacities in abstract Wiener spaces. This yields 
a criterion for the $L^p$-uniqueness of the Ornstein-Uhlenbeck operator and its integer powers defined on suitable algebras of functions vanishing in a neighborhood of a given closed set $\Sigma$ of zero Gaussian measure. To prove the equivalence we show the $W^{r,p}(B,\mu)$-boundedness of certain smooth nonlinear truncation operators acting on potentials of nonnegative functions. We also give connections to Gaussian Hausdorff measures. Roughly speaking, if $L^p$-uniqueness holds then the 'removed' set $\Sigma$ must have sufficiently large codimension, in the case of the Ornstein-Uhlenbeck operator for instance at least $2p$.
\end{abstract}
\maketitle
\tableofcontents


\section{Introduction} 

The present article deals with capacities associated with Ornstein-Uhlenbeck operators on abstract Wiener spaces $(B,\mu,H)$, \cite{B1, Bo, Gross67, IkWa89, LMPnotes, M, Ma97, S, Su88}, and applications to $L^p$-uniqueness problems for Ornstein-Uhlenbeck operators and their integer powers, endowed with algebras of functions vanishing in a neighborhood of a small closed set.

Our original motivation comes from $L^p$-uniqueness problems for operators $L$ endowed with a suitable algebra $\mathcal{A}$ of functions, the special case $p=2$ is the problem of essential self-adjointness. For the 'globally defined' operator $L$ on the entire space $L^p$-uniqueness is well understood, see for instance \cite{E} and the references cited there. If the globally defined operator is $L^p$-unique one can ask whether the removal of a small set (or, in other words, the introduction of a small boundary) destroys this uniqueness or not. A loss of uniqueness means that extensions to generators of $C_0$-semigroups, \cite{Na}, with different boundary conditions exist. The answer to this question depends on the size of the removed set. The most classical example may be the essential self-adjointness problem for the Laplacian $\Delta$ on $\mathbb{R}^n$, endowed with the algebra $C_c^\infty(\mathbb{R}^n\setminus \left\{0\right\})$ of smooth compactly supported functions on $\mathbb{R}^n$ with the origin $\left\{0\right\}$ removed. It is well known that this operator is essential self-adjoint in $L^2(\mathbb{R}^n)$ if and only if $n\geq 4$, \cite[p.114]{Takeda92} and \cite[Theorem X.11, p.161]{R}. Generalizations of this example to manifolds have been provided in \cite{CdV82} and \cite{Masamune99}, more general examples on Euclidean spaces can be found in \cite{AGMST13} and \cite{HiKaMa}, further generalizations to manifolds and metric measure spaces will be discussed in \cite{HiMa}. For the Laplacian on $\mathbb{R}^n$ one main observation is that, 
if a compact set $\Sigma$ of zero measure is removed from $\mathbb{R}^n$, the essential self-adjointness of $(\Delta, C_c^\infty(\mathbb{R}^n\setminus \Sigma))$ in $L^2(\mathbb{R}^n)$ implies that $\dim_H \Sigma\leq n-4$, where $\dim_H$ denotes the Hausdorff dimension. See \cite[Theorems 10.3 and 10.5]{AGMST13} or \cite[Theorem 2]{HiKaMa}. This necessary 'codimension four' condition can be rephrased by saying that we must have $\mathcal{H}^{n-d}(\Sigma)=0$ for all $d<4$, where $\mathcal{H}^{n-d}$ denotes the Hausdorff measure of dimension $n-d$. 

Having in mind coefficient regularity or boundary value problems for operators in infinite dimensional spaces, see e.g. \cite{B2, DPL, DPL2, H, H1}, one may wonder whether a similar 'codimension four' condition can be observed in infinite dimensional situations. For the case of Ornstein-Uhlenbeck operators on abstract Wiener spaces an affirmative answer to this question follows from the present results in the special case $p=2$.

The basic tools to describe the critical size of a removed set $\Sigma\subset B$ are capacities associated with the Sobolev spaces $W^{r,p}(B,\mu)$ for the $H$-derivative respectively the Ornstein-Uhlenbeck semigroup, \cite{B1, Bo, Gross67, IkWa89, LMPnotes, M, Ma97, S, Su88}. Such capacities can be introduced following usual concepts of potential theory, \cite{Bo, FdP91, Ma97, SI, S, So93, So93b, Su88}, see Definition \ref{def5} below, and they are known to be connected to Gaussian Hausdorff measures, \cite{FdP92}. Uniqueness problems connect easier to another, slightly different definition of capacities, where the functions taken into account in the definition are recruited from the initial algebra $\mathcal{A}$ and, roughly speaking, are required to be equal to one on the set in question, see Definitions \ref{def6} and \ref{def7}.
This type of definition connects them to an algebraic ideal property which is helpful to investigate extensions of operators initially defined on ideals of $\mathcal{A}$. For Euclidean Sobolev spaces these two types of capacities are known to be equivalent, see for instance \cite[Section 2.7]{A}. The proofs of these equivalences go back to Mazja, Khavin, Adams, Hedberg, Polking and others, \cite{A76, A, AP, Mazja72, Mazja, MazjaKhavin}, and rely on bounds in Sobolev norms for certain nonlinear composition operators acting on the cone of nonnegative Sobolev functions, see e.g. \cite[Theorem 3]{A76}, or the cone of potentials of nonnegative functions, see e.g. \cite[Theorem 2]{A76} or \cite[Theorem 3.3.3]{A}. Apart from the first order case $r=1$ this is nontrivial, because in finite dimensions Sobolev spaces are not stable under such compositions, see for instance \cite[Theorem 3.3.2]{A}. Apart from the case $p=2$, where one can also use an integration by parts argument, \cite[Theorem 3]{A76}, the desired bounds are shown using suitable Gagliardo-Nirenberg inequalities, \cite{AP, Mazja72}, or suitable multiplicative estimates of Riesz or Bessel potential operators involving Hardy-Littelwood maximal functions and the $L^p$-boundedness of the latter, \cite[Theorem 1.1.1, Proposition 3.1.8]{A} The constants in these estimates are dimension dependent. 
 
Sobolev spaces $W^{r,p}(B,\mu)$ over abstract Wiener spaces $(B,\mu,H)$ are stable under compositions with bounded smooth functions, \cite[Remark 5.2.1 (i)]{B1}, but one still needs to establish quantitative bounds. We establish Sobolev norm bounds for nonlinear composition operators acting on 
potentials of nonnegative functions, Lemma \ref{lemma1}. To obtain it, we use the $L^p$-boundedness of the maximal function in the sense of Rota and Stein for the Ornstein-Uhlenbeck semigroup, \cite[Theorem 3.3]{S}, this provides a similar multiplicative estimate as in the finite dimensional case, see Lemma \ref{lemma_multest}. From the Sobolev norm estimate for compositions we can then deduce the desired equivalence of capacities, Theorem \ref{T:equiv}, where $\mathcal{A}$ is chosen to be the set of smooth cylindrical functions or the space of Watanabe test functions. Applications of this equivalence provide $L^p$-uniqueness results for the Ornstein-Uhlenbeck operator and, under a sufficient condition that ensures they generate $C_0$-semigroups, also for its integer powers, see Theorem \ref{T:Lp}. In particular, if $\Sigma\subset B$ is a given closed set of zero Gaussian measure, then the Ornstein-Uhlenbeck operator, endowed with the algebra of cylindrical functions vanishing in a neighborhood of $\Sigma$ (or the algebra 
of Watanabe test functions vanishing q.s. on a neighborhood of $\Sigma$) is $L^p$-unique if and only if the $(2,p)$-capacity of $\Sigma$ is zero, see Theorem \ref{T:Lp}. Combined with results from \cite{FdP92} on Gaussian Hausdorff measures, we then observe that the $L^p$-uniqueness of this Ornstein-Uhlenbeck operator 'after the removal of $\Sigma$'
implies that the Gaussian Hausdorff measure $\varrho_d(\Sigma)$ of codimension $d$ of $\Sigma$ must be zero for all $d<2p$, see Corollary \ref{C:codimension}. In particular, if the operator is essentially self-adjoint on $L^2(B,\mu)$, then $\varrho_d(\Sigma)$ must be zero for all $d<4$, what is an analog of the necessary 'codimension four' condition knwon from the Euclidean case. 

In the next section we recall standard items from the analysis on abstract Wiener spaces. In Section \ref{S:caps} we define Sobolev capacities and prove their equivalence, based on the norm bound on nonlinear compositions, which is proved in Section \ref{S:truncationproof}. Section \ref{S:Lp} contains the mentioned $L^p$-uniqueness results.
The connection to Gaussian Hausdorff measures is briefly discussed in Section \ref{S:Hausdorff}, followed by some remarks on related Kakutani theorems for multiparameter processes in Section \ref{S:processes}.

\section*{Acknowledgements}

The authors would like to thank Masanori Hino, Jun Masamune, Michael R\"ockner and Gerald Trutnau for inspiring and helpful discussions.

\section{Preliminaries}

Following the presentation in \cite{S}, we provide some basic definitions and facts.

Let $(B,\mu, H)$ be an \emph{abstract Wiener space}. That is, $B$ is a real separable Banach space, $H$ is a real separable Hilbert space which is embedded densely and continuously on $B$, and $\mu$ is a Gaussian measure on $B$ with
$$\int_B{\exp\{\sqrt{-1}\langle \varphi, y\rangle\}\mu(dy)}=\exp\{-\dfrac{1}{2}|\varphi|^2_{H^*}\}, \quad \varphi\in B^*,$$
see for instance \cite[Definition 1.2]{S}. Here we identify $H^\ast$ with $H$ as usual, so that $B^\ast\subset H\subset B$.  Since every $\varphi\in B^\ast$ is $N(0,\left\|\varphi\right\|_{H}^2)$-distributed, it is an element of $L^2(B;\mu)$ and the map $\varphi\mapsto \left\langle\varphi,\cdot\right\rangle$ is an isometry from $B^\ast$, equipped with the scalar product $\left\langle\cdot,\cdot\right\rangle_H$, into $L^2(B,\mu)$. It extends uniquely to an isometry
\begin{equation}\label{E:isometry}
h\mapsto \hat{h}
\end{equation} 
from $H$ into $L^2(B,\mu)$. A function $f:B\to\mathbb{R}$ is said to be \emph{$H$-differentiable at $x\in B$} if there exists some $h^\ast \in H^\ast$ such that 
\[\frac{d}{dt} f(x+th)|_{t=0}=\left\langle h,h^\ast\right\rangle\]
for all $h\in H$. If $f$ is $H$-differentiable at $x$ then $h^\ast$ is uniquely determined, denoted by $Df(x)$ and refereed to as the \emph{$H$-derivative of $f$ at $x$}. See \cite[Definition 2.6]{S}. For a function $f$ that is $H$-differentiable at $x\in B$ and an element $h$ of $H$ we can define the \emph{directional derivative $\partial_h f(x)$ of $f$ at $x$} by
\[\partial_h f(x):=\left\langle Df(x),h\right\rangle_H.\]
A function $f:B\to\mathbb{R}$ is said to be \emph{$k$-times $H$-differentiable at $x\in B$} if there exists a continuous $k$-linear mapping $\Phi_x:H^k\to\mathbb{R}$ such that 
\[\frac{\partial^k}{\partial t_1\cdots \partial t_k} f(x+t_1h_1+\dots+t_kh_k)|_{t_1=\dots=t_k=0}=\Phi_x(h_1,\dots h_k)\]
for all $h_1,\dots, h_k\in H$. If so, $\Phi_x$ is unique and denoted by $D^kf(x)$. 
A function $f:B\to\mathbb{R}$ is called a \emph{(smooth) cylindrical function} if there exist an integer $n\geq 1$, linear functionals $l_1,...,l_n\in B^\ast$ and a function $F\in C_b^\infty(\mathbb{R}^n)$ such that 
\begin{equation}\label{E:cylindricalf}
f=F(l_1,...,l_n).
\end{equation}
The space of all such cylindrical functions on $B$ we denote by $\mathcal{F}C_b^\infty$. Clearly $\mathcal{F}C_b^\infty$ is an algebra under pointwise multiplication and stable under the composition with functions $T\in C_b^\infty(\mathbb{R})$.

A cylindrical function $f\in \mathcal{F}C_b^\infty$ as in (\ref{E:cylindricalf}) is infinitely many times $H$-differentiable at any $x\in B$, and for any $k\geq 1$ we have 
\[D^kf(x)=\sum_{j_1,\dots j_k=1}^\infty \partial_{j_1}\cdots\partial_{j_k}F(\left\langle x,l_1\right\rangle, ..., \left\langle x, l_n\right\rangle)\:l_{j_1}\otimes\cdots\otimes l_{j_k},\]
where $\partial_j$ denotes the $j$-th partial differentiation in the Euclidean sense. The space $\mathcal{F}C^{\infty}_b$ is dense in $L^p(B,\mu)$ for any $1\leq p<+\infty$, see e.g. \cite[Lemma 2.1]{B}. 

We write $\mathcal{H}_0:=\mathbb{R}$, $\mathcal{H}_1:=H$ and generalizing this, denote by $\mathcal{H}_k$ the space of $k$-linear maps $A:H^k\to \mathbb{R}$ such that 
\begin{equation}\label{E:HS}
\left\|A\right\|_{\mathcal{H}_k}^2:=\sum_{j_1,\dots, j_k=1}^\infty (A(e_{j_1},\dots, e_{j_k}))^2<+\infty,
\end{equation}
where $(e_i)_{i=1}^\infty$ is an orthonormal basis in $H$. The value of this norm does not depend on the choice of this basis. See \cite[p.3]{B1a}. Clearly every such $k$-linear map $A$ can also be seen as a linear map $A:H^{\otimes k}\to\mathbb{R}$, where $H^{\otimes k}$ denotes the $k$-fold tensor product of $H$, with this interpretation we have  $A(e_{j_1}\otimes ...\otimes  e_{j_k})=A(e_{j_1},\dots, e_{j_k})$ and by (\ref{E:HS}) the operator $A$ is a Hilbert-Schmidt operator. For later use we record the following fact.

\begin{prop}\label{P:HSnormestimate} For any $A\in\mathcal{H}_k$ we have 
\begin{multline}
\|A\|_{\mathcal{H}_k}\leq 2k^k\sup\left\lbrace |A(h_1,...,h_k)|: \text{ $h_1,\cdots, h_k$ are members} \right.\notag\\
\left.\text{of an orthonormal system in $H$, not necessarily distinct}\right\rbrace.
\end{multline}
\end{prop}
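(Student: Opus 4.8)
The plan is to control the value of $A$ on orthonormal $k$-tuples by a polarization argument and then to recover the full Hilbert-Schmidt norm from these values. Since $A\in\mathcal H_k$, the series defining $\|A\|_{\mathcal H_k}^2$ converges, so it is enough to bound the finite partial sums $\sum_{j_1,\dots,j_k\le N}A(e_{j_1},\dots,e_{j_k})^2$ uniformly in $N$. Passing to the restriction of $A$ to $\lin\{e_1,\dots,e_N\}$ only shrinks the family of admissible orthonormal systems, hence does not increase the supremum on the right-hand side, so I may work in finite dimensions throughout.

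The engine is the polarization identity: for a symmetric $k$-linear form and any $h_1,\dots,h_k$,
\[
A(h_1,\dots,h_k)=\frac{1}{2^kk!}\sum_{\varepsilon\in\{-1,1\}^k}\varepsilon_1\cdots\varepsilon_k\,A\Big(\sum_{i=1}^k\varepsilon_ih_i,\dots,\sum_{i=1}^k\varepsilon_ih_i\Big).
\]
If $h_1,\dots,h_k$ belong to an orthonormal system then $\big\|\sum_i\varepsilon_ih_i\big\|_H=\sqrt{k}$, so writing $\sum_i\varepsilon_ih_i=\sqrt{k}\,u_\varepsilon$ with $u_\varepsilon$ a unit vector and using the $k$-homogeneity of the diagonal gives
\[
|A(h_1,\dots,h_k)|\le\frac{k^{k/2}}{k!}\sup_{\|u\|_H=1}|A(u,\dots,u)|.
\]
Each diagonal value $A(u,\dots,u)$ is the value of $A$ on the one-element orthonormal system $\{u\}$ with repeated entries and is therefore dominated by the supremum in the statement; bounding $k^{k/2}/k!$ (together with the overhead of the remaining step) crudely by $2k^k$ is where the claimed constant originates.

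The step I expect to be the main obstacle is the passage from this pointwise control of the individual coordinates $A(e_{j_1},\dots,e_{j_k})$ to the summed Hilbert-Schmidt norm, that is, exploiting the orthonormality in all $k$ slots at once rather than coordinate by coordinate. I would handle it by applying the identity along orthonormal directions chosen to realize the Hilbert-Schmidt norm, reducing each resulting term to a diagonal value bounded by the right-hand supremum, and then collecting the contributions while tracking the combinatorial constants. A second, related, difficulty is that for non-symmetric $A$ the diagonal values see only the symmetric part, so there one must genuinely use the off-diagonal orthonormal values admitted by the right-hand side; verifying that no dimension-dependent factor survives either reduction is the delicate point.
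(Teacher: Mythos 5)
Your polarization step is correct but points in the unhelpful direction: it bounds the values $|A(h_1,\dots,h_k)|$ on orthonormal tuples by the diagonal values $|A(u,\dots,u)|$, i.e.\ it compares two quantities each of which is already dominated by the supremum on the right-hand side of the proposition. The actual content of the statement is the reverse passage, from the summed quantity $\|A\|_{\mathcal{H}_k}$ down to that supremum, and this is exactly the step you defer and never carry out. Your sketch of it --- ``applying the identity along orthonormal directions chosen to realize the Hilbert--Schmidt norm'' --- is not an argument: by Riesz representation, $\|A\|_{\mathcal{H}_k}$ is realized by a unit vector of $H^{\otimes k}$ which is in general far from any elementary tensor $y_1\otimes\cdots\otimes y_k$, so it is not captured by testing $A$ on $k$-tuples of directions at all. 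What your coordinate bounds actually give is only $\sum_{j_1,\dots,j_k\le N}A(e_{j_1},\dots,e_{j_k})^2\le N^k\sup(\cdots)^2$, i.e.\ the dimension-dependent constant $N^{k/2}$ rather than $2k^k$.

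Moreover, this gap cannot be closed by better bookkeeping, because the inequality as stated is false. Take $k=2$, fix $n$, and let $A(x,y):=\sum_{i=1}^n\langle x,e_i\rangle_H\langle y,e_i\rangle_H=\langle P_nx,P_ny\rangle_H$, where $P_n$ is the orthogonal projection onto $\lin\{e_1,\dots,e_n\}$. Then $\|A\|_{\mathcal{H}_2}=\sqrt{n}$, while for members $h_1,h_2$ of an orthonormal system one has $|A(h_1,h_2)|\le 1$: if $h_1=h_2$ this value is $\|P_nh_1\|_H^2\le1$, and if $h_1\perp h_2$ then $\langle P_nh_1,P_nh_2\rangle_H=-\langle(I-P_n)h_1,(I-P_n)h_2\rangle_H$ has modulus at most $1$. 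So the right-hand side of the proposition is at most $2\cdot 2^2=8$ while the left-hand side is unbounded in $n$; note this $A$ is symmetric, so your separate worry about non-symmetric forms is not where the problem lies. For fairness, you should know the paper's own proof stumbles at precisely the point you flagged as the main obstacle: after correctly writing $\|A\|_{\mathcal{H}_k}=\sup\{|Ay|:\ y\in H^{\otimes k},\ \|y\|_{H^{\otimes k}}=1\}$, it asserts that one may choose an \emph{elementary} tensor $y=y_1\otimes\cdots\otimes y_k$ of norm one with $\|A\|_{\mathcal{H}_k}\le2|Ay|$; in the example above the supremum of $|Ay|$ over unit elementary tensors equals $1$, so no such choice exists. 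Any correct substitute must therefore either accept dimension-dependent constants or exploit the special structure (via Proposition \ref{P:CameronMartin}) of the particular forms $D^kV_rf(x)$ to which the proposition is applied in the proof of Lemma \ref{lemma_multest}.
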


\begin{proof}
By Parseval's identity and Cauchy-Schwarz in $H^{\otimes k}$ we have 
\[\left\|A\right\|_{\mathcal{H}_k}=\sup\left\lbrace |Ay|: \text{ $y\in H^{\otimes k}$ and $\left\|y\right\|_{H^{\otimes k}}=1$}\right\rbrace.\] 
Choose an element $y=y_1 \otimes ...\otimes y_k\in H^{\otimes k}$ such that $\|y\|_{H^{\otimes k}}=1$ and $\|A\|_{\mathcal{H}^k}\leq 2|Ay|$. Without loss of generality we may assume that $\|y_j\|_H=1$, $1\leq j\leq k$. Choosing an orthonormal basis $(b_i)_{i=1}^n$ in the subspace $\lin \left\lbrace y_1,...,y_k\right\rbrace$ of $H$ we observe $n\leq k$ and 
$y_j=\sum_{i=1}^n b_i\lambda_{ij}$ with some $|\lambda_{ij}|\leq 1$. Since this implies  
\[|Ay|\leq \sum_{i_1,\cdots,i_k\in \{1,\cdots,n\}}{|A(b_{i_1},\cdots,b_{i_k}})|,\] 
we obtain the desired result.
\end{proof}

We recall the definition of Sobolev spaces on $B$. For any $1\leq p<+\infty$ and $k\geq 0$ let $L^p(B,\mu,\mathcal{H}_k)$ denote the $L^p$-space of functions from $B$ into $\mathcal{H}_k$. For any $1\leq p<+\infty$ and integer $r\geq 0$ set 
\begin{equation}\label{E:Sobonorm}
\left\|f\right\|_{W^{r,p}(B,\mu)}:=\sum_{k=0}^r \left\|D^k f\right\|_{L^p(B,\mu,\mathcal{H}_k)},
\end{equation}
$f\in\mathcal{F}C_b^\infty$. The \emph{Sobolev class $W^{r,p}(B,\mu)$} is defined as the completion of $\mathcal{F}C_b^\infty$ in this norm, see \cite[Section 5.2]{B1} or \cite[Section 8.1]{B1a}. In particular, $W^{0,p}(B,\mu)=L^p(B,\mu)$.
For $f\in W^{r,p}(B,\mu)$ the derivatives $D^kf$, $k\leq r$, are well defined as elements of $L^p(B,\mu)$, see \cite[Section 5.2]{B1}. By definition the spaces $W^{r,p}(B,\mu)$ are Banach spaces, Hilbert if $p=2$. The space $W^{\infty}$ of \emph{Watanabe test functions} is defined as
\[W^\infty:=\bigcap_{r\geq 1,\: 1\leq p<+\infty} W^{r,p}(B,\mu).\]
We have $\mathcal{F}C_b^\infty\subset W^\infty$, in particular, $W^\infty$ is a dense subset of every $L^p(B,\mu)$ and $W^{r,p}(B,\mu)$. 

In contrast to Sobolev spaces over finite dimensional spaces, \cite[Theorem 3.3.2]{A}, also the Sobolev classes $W^{r,p}(B,\mu)$, $r\geq 2$, are known to be stable under compositions $u\mapsto T(u)=T\circ u$ with functions $T\in C_b^\infty(\mathbb{R})$, as follows from the evaluation of an integration by parts identity together with the chain rule, applied to cylindrical functions. See \cite[Remark 5.2.1 (i)]{B1} or \cite[Proposition 8.7.5]{B1a}. In particular, the space $W^\infty$ is stable under compositions with functions from $C_b^\infty(\mathbb{R})$. Also, it is an algebra with respect to pointwise multiplication, \cite[Corollary 5.8]{Ma97}.

Given a bounded (or nonnegative) Borel function $f:B\to\mathbb{R}$ and $t>0$ set 
\begin{equation}\label{E:Mehler} 
P_tf(x):=\int_B{f(e^{-t}x+\sqrt{1-e^{-2t}}y)\mu(dy)},\quad x\in B.
\end{equation} 
The function $P_tf$ is again bounded (resp. nonnegative) Borel on $B$ and the operators $P_t$ form a semigroup, i.e. that for any $s,t>0$ we have $P_{t+s}=P_tP_s$. The semigroup $(P_t)_{t>0}$ is called the \emph{Ornstein-Uhlenbeck semigroup on $B$}. For any $1\leq p\leq +\infty$ it extends to a contraction semigroup $(P^{(p)}_t)_{t>0}$ on $L^p(B,\mu)$, \cite[Proposition 2.4]{S}, strongly continuous for $1\leq p<+\infty$. The semigroup $(P^{(2)}_t)_{t>0}$ is a sub-Markovian symmetric semigroup on $L^2(B,\mu)$ in the sense of \cite[Definition I.2.4.1]{Bo}. The infinitesimal generators $(\mathcal{L}^{(p)},\mathcal{D}(\mathcal{L}^{(p)}))$ of $(P^{(p)}_t)_{t>0}$ is called the \emph{Ornstein-Uhlenbeck operator on $L^p(B,\mu)$}, \cite[Section 2.1.4]{S}. We will always write $P_t$ and $\mathcal{L}$ instead of $P^{(p)}_t$ and $\mathcal{L}^{(p)}$, the meaning will be clear from the context. Given $r>0$ and a bounded (or nonnegative)  Borel function $f:B\to\mathbb{R}$, set 
\begin{equation}\label{E:Vr}
V_{r}f:=\dfrac{1}{\Gamma(r/2)}\int^{\infty}_0{t^{r/2-1}e^{-t}P_t fdt},
\end{equation}
where $\Gamma$ denotes the Euler Gamma function. The function $V_r f$ is again bounded (resp. nonnegative) Borel, and for any $1\leq p<\infty$ the operators $V_r$ form a strongly continuous contraction semigroup $(V_r)_{r>0}$ on $L^p(B,\mu)$, see \cite[Corollary 5.3.3]{B1} or \cite[Proposition 4.7]{S}, symmetric for $p=2$. In any of these spaces the operators $V_r$ are the powers $(I-\mathcal{L})^{-r/2}$ of order $r/2$ of the respective $1$-resolvent operators $(I-\mathcal{L})^{-1}$.
Meyer's equivalence, \cite[Theorem 8.5.2]{B1a}, \cite[Theorem 4.4]{S}, states that for any integer $r\geq 1$ and any $1<p<+\infty$ and any $u\in W^{r,p}(B,\mu)$ we have
\begin{equation}\label{E:equivnorm}
c_1\left\|u\right\|_{W^{r,p}(B,\mu)}\leq \left\| (I-\mathcal{L})^{r/2} u\right\|_{L^p(B,\mu)}\leq c_2\left\|u\right\|_{W^{r,p}(B,\mu)}
\end{equation}
with constants $c_1>0$ and $c_2>0$ depending only on $r$ and $p$. By the continuity of the $V_r$ and the density of cylindrical functions we observe $W^{r,p}(B,\mu)=V_r(L^p(B,\mu))$. The operator $V_r$ acts as an isometry from $W^{s,p}(B,\mu)$ onto $W^{s+r,p}(B,\mu)$, \cite[Chapter II, Theorem 7.3.1]{Bo}. For later use we record the following well known fact.

\begin{prop}\label{P:preserve}
For any $r>0$ we have $V_r(\mathcal{F}C_b^\infty)\subset \mathcal{F}C_b^\infty$ and $V_r(W^\infty)\subset W^\infty$.
\end{prop}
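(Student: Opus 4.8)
The plan is to reduce the first inclusion to a finite-dimensional computation via the Mehler formula (\ref{E:Mehler}), and to obtain the second inclusion from the boundedness of $V_r$ on each Sobolev class $W^{m,p}(B,\mu)$. First I would show that $P_t$ already preserves $\mathcal{F}C_b^\infty$. If $f=F(l_1,\dots,l_n)$ is as in (\ref{E:cylindricalf}), then $\langle e^{-t}x+\sqrt{1-e^{-2t}}\,y,\,l_i\rangle=e^{-t}\langle x,l_i\rangle+\sqrt{1-e^{-2t}}\,\langle y,l_i\rangle$, and the joint law under $\mu$ of the map $x\mapsto(\langle x,l_1\rangle,\dots,\langle x,l_n\rangle)$ is the centered Gaussian measure $\gamma_Q$ on $\mathbb{R}^n$ with covariance $Q=(\langle l_i,l_j\rangle_H)_{i,j}$. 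Hence (\ref{E:Mehler}) gives $P_tf=G_t(l_1,\dots,l_n)$ with
\[G_t(\xi):=\int_{\mathbb{R}^n}F\bigl(e^{-t}\xi+\sqrt{1-e^{-2t}}\,z\bigr)\,\gamma_Q(dz),\qquad \xi\in\mathbb{R}^n.\]
Differentiating under the integral sign, which is justified because $F$ and all its derivatives are bounded, shows $G_t\in C_b^\infty(\mathbb{R}^n)$ with $\|\partial^\alpha G_t\|_\infty\le e^{-|\alpha|t}\|\partial^\alpha F\|_\infty$ for every multi-index $\alpha$; so $P_tf\in\mathcal{F}C_b^\infty$, with the same functionals $l_1,\dots,l_n$.

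Next I would insert this into (\ref{E:Vr}) and write $V_rf=\widetilde G(l_1,\dots,l_n)$, where
\[\widetilde G:=\frac{1}{\Gamma(r/2)}\int_0^\infty t^{r/2-1}e^{-t}\,G_t\,dt.\]
Since $t\mapsto t^{r/2-1}e^{-t}$ is integrable on $(0,\infty)$ for every $r>0$ (the singularity at $0$ is integrable because $r/2-1>-1$), and since the uniform bounds $\|\partial^\alpha G_t\|_\infty\le\|\partial^\alpha F\|_\infty$ allow one to differentiate under the $dt$-integral as well, one gets $\widetilde G\in C_b^\infty(\mathbb{R}^n)$ with $\|\partial^\alpha\widetilde G\|_\infty\le\|\partial^\alpha F\|_\infty$. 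Thus $V_rf\in\mathcal{F}C_b^\infty$, which proves the first inclusion.

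For the second inclusion I would show that $V_r$ maps each $W^{m,p}(B,\mu)$ continuously into itself. Using the commutation relation $D^kP_t=e^{-kt}P_t\,D^k$, which follows by differentiating the Mehler formula on cylindrical functions and passing to the limit, together with the fact that $P_t$ is a contraction on $L^p(B,\mu,\mathcal{H}_k)$, one obtains from (\ref{E:Sobonorm}) the bound $\|P_tu\|_{W^{m,p}(B,\mu)}\le\|u\|_{W^{m,p}(B,\mu)}$ for every integer $m\ge0$, every $1\le p<\infty$ and every $t>0$. Interpreting (\ref{E:Vr}) as a Bochner integral in $W^{m,p}(B,\mu)$ then yields $\|V_ru\|_{W^{m,p}(B,\mu)}\le\|u\|_{W^{m,p}(B,\mu)}$. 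Consequently, if $u\in W^\infty$ lies in every $W^{m,p}(B,\mu)$, so does $V_ru$, whence $V_ru\in W^\infty$. Alternatively, the second inclusion can be deduced from the isometry property of $V_r$ between $W^{s,p}(B,\mu)$ and $W^{s+r,p}(B,\mu)$ recorded above, together with the embedding $W^{s+r,p}(B,\mu)\subset W^{s,p}(B,\mu)$.

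The differentiations under the integral signs are routine; the point requiring care is the uniformity of the bounds $\|\partial^\alpha G_t\|_\infty\le\|\partial^\alpha F\|_\infty$ in $t$, since this is exactly what makes the $dt$-integrals converge and what lets the contraction argument run simultaneously for all Sobolev orders $m$ and all exponents $p$, including $p=1$, where Meyer's equivalence (\ref{E:equivnorm}) is unavailable.
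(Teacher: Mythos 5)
Your proof is correct. For the first inclusion it is essentially the paper's argument: both reduce the claim to a finite-dimensional computation through the Mehler formula (\ref{E:Mehler}) and then integrate in $t$ using dominated convergence. The only cosmetic difference is that the paper first applies Gram--Schmidt so that $l_1,\dots,l_n$ form an orthonormal system in $H$, which makes the pushforward of $\mu$ the standard Gaussian and identifies $P_tf$ with $T_t^{(n)}F(l_1,\dots,l_n)$, whereas you keep the $l_i$ general and push $\mu$ forward to the centered Gaussian $\gamma_Q$ with covariance $Q=(\langle l_i,l_j\rangle_H)_{i,j}$; you also spell out the bound $\|\partial^\alpha G_t\|_\infty\le e^{-|\alpha|t}\|\partial^\alpha F\|_\infty$, which the paper subsumes under the assertion that $T_t^{(n)}$ preserves $C_b^\infty(\mathbb{R}^n)$. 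For the second inclusion the routes genuinely differ: the paper disposes of it in one line by invoking the fact, quoted from Bouleau--Hirsch, that $V_r$ is an isometry of $W^{s,p}(B,\mu)$ onto $W^{s+r,p}(B,\mu)$ (this is exactly your ``alternative'' argument), while your main argument derives the contraction bound $\|P_tu\|_{W^{m,p}(B,\mu)}\le\|u\|_{W^{m,p}(B,\mu)}$ from the commutation $D^kP_t=e^{-kt}P_tD^k$ and vector-valued Jensen, and then estimates $V_r$ by reading (\ref{E:Vr}) as a Bochner integral in $W^{m,p}(B,\mu)$. Your version is self-contained and, as you note, runs uniformly over all $1\le p<\infty$ and all real $r>0$ while using only the integer-order spaces the paper actually defines, whereas the cited isometry implicitly lives on the fractional-order scale; the price is the routine verification that $t\mapsto P_tu$ is strongly continuous (hence Bochner integrable) in $W^{m,p}(B,\mu)$, which follows from the contraction property, density of $\mathcal{F}C_b^\infty$, and continuity on cylindrical functions.
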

\begin{proof}
From the preceding lines it is immediate that $V_r(W^\infty)\subset W^\infty$. To see the remaining statement 
suppose $f\in \mathcal{F}C_b^\infty$ with $f=F(l_1,...,l_n)$, $l_i\in B^\ast$, $F\in C_b^\infty(\mathbb{R}^n)$, and by applying Gram-Schmidt we may assume $\left\lbrace l_1,...,l_n\right\rbrace$ is an orthonormal system in $H$. The Ornstein-Uhlenbeck semigroup $(T_t^{(n)})_{t>0}$ on $L^2(\mathbb{R}^n)$, defined by
\[T_t^{(n)}F(\xi)=\int_{\mathbb{R}^n} F(e^{-t}\xi+\sqrt{1-e^{-2t}}\eta)(2\pi)^{-n/2}\:e^{-|\eta|^2/2}d\eta,\]
preserves smoothness, i.e. $T_t^{(n)}F\in C_b^\infty(\mathbb{R}^n)$ for any $F\in C_b^\infty(\mathbb{R}^n)$.

Given $x\in B$ and writing $\xi=(\left\langle x, l_1\right\rangle_H,...,\left\langle x,l_n\right\rangle_H)$, we have 
\begin{align}
&P_tf(x)\notag\\
&=\int_B F(\big\langle e^{-t}x+\sqrt{1-e^{-2t}}y,l_1\big\rangle_H,...,\big\langle e^{-t}x+\sqrt{1-e^{-2t}}y,l_n\big\rangle_H)\mu(dy)\notag\\
&= \int_B F(e^{-t}\xi+\sqrt{1-e^{-2t}}(\left\langle y, l_1\right\rangle_H,...,\left\langle y,l_n\right\rangle_H))\mu(dy)\notag\\
&=\int_{\mathbb{R}^n} F(e^{-t}\xi+\sqrt{1-e^{-2t}}\eta)(2\pi)^{-n/2}\:e^{-|\eta|^2/2}d\eta\notag\\
&=F_t^{(n)}(\left\langle x, l_1\right\rangle_H,...,\left\langle x,l_n\right\rangle_H),\notag
\end{align}
where $F_t^{(n)}=T_t^{(n)}F$. Consequently $P_tf\in \mathcal{F}C_b^\infty$, and using (\ref{E:Vr}) and dominated convergence it follows that $V_rf\in\mathcal{F}C_b^\infty$.
\end{proof}

Although different in nature both $\mathcal{F}C_b^\infty$ and $W^\infty$ can serve as natural replacements in infinite dimensions for algebras of smooth differentiable functions in Euclidean spaces or on manifolds.


\section{Capacities and their equivalence}\label{S:caps}

We define two types of capacities related to $W^{r,p}(B,\mu)$-spaces and verify their equivalence. 

The following definition is standard, see for instance \cite{FdP91, SI}.

\begin{defn}\label{def5} Let $1\leq p<+\infty$ and let $r>0$ be an integer. For open $U\subset B$, let  
$$\cpct_{r,p}(U):=\inf\{\left\|f\right\|^p_{L^p}|\:f\in L^p(B,\mu),\, V_r f\geq 1 \text{ $\mu$-a.e. on } U\}$$
and for arbitrary $A\subset B$,
$$\cpct_{r,p}(A):=\inf\{\cpct_{r,p}(U)|\:A\subset U,\ \text{$U$ open}\}.$$
\end{defn}

We give two further definitions of $(r,p)$-capacities. The first one is based on cylindrical functions and resembles \cite[Definition 2.7.1]{A} and \cite[Chapter 13]{Mazja}. 

\begin{defn}\label{def6} Let $1\leq p<+\infty$ and let $r>0$ be an integer. For an open set $U\subset B$ define
\[\ccpct_{r,p}^{(\mathcal{F}C_b^\infty)} (U):=\inf\left\lbrace \left\|u\right\|_{W^{r,p}(B,\mu)}^p|\: u\in \mathcal{F}C_b^\infty, \: \text{$u=1$ on $U$}\right\rbrace,\] 
and for an arbitrary set $A\subset B$,
$$\ccpct_{r,p}^{(\mathcal{F}C_b^\infty)}(A):=\inf\{\ccpct_{r,p}^{(\mathcal{F}C_b^\infty)}(U)|\: A\subset U,\, \text{$U$ open}\}.$$
\end{defn}

The capacities $\ccpct_{r,p}^{(\mathcal{F}C_b^\infty)}$ have useful 'algebraic' properties which we will use in Section \ref{S:Lp}.

One can give a similar definition based on the space $W^\infty$. To do so, we recall some potential theoretic notions.
If a property holds outside a set $E\subset B$ with $\cpct_{r,p}(E)=0$ then we say it holds \emph{$(r,p)$-quasi everywhere (q.e.)}. We follow \cite[Chapter IV, Section 1.2]{Ma97} and call a set $E\subset B$ \emph{slim} if  $\cpct_{r,p}(E)=0$ for all $1< p<+\infty$ and all integer $r>0$, and if a property holds outside a slim set, we say it holds \emph{quasi surely (q.s.)}. A function $u:B\to \mathbb{R}$ is said to be \emph{$(r,p)$-quasi continuous} if for any $\varepsilon>0$ we can find an open set $U_\varepsilon\subset B$ such that $\cpct_{r,p}(U)<\varepsilon$ and the restriction $u|_{U_\varepsilon^c}$ of $u$ to $U_\varepsilon^c$ is continuous. Every function $u\in W^{r,p}(B,\mu)$ admits a $(r,p)$-quasi-continuous version $\widetilde{u}$, unique in the sense that two different quasi continuous versions can differ only on a set of zero $(r,p)$-capacity. Since continuous functions are dense in $W^{r,p}(B,\mu)$ this follows by standard arguments, see for instance \cite[Chapter I, Section 8.2]{Bo}. Now one can follow \cite[Chapter IV, Section 2.4]{Ma97} to see that for any $u\in W^\infty$ there exists a function $\widetilde{u}:B\to\mathbb{R}$ such that $u=\widetilde{u}$ $\mu$-a.e. and for all $r$ and $p$ the function $\widetilde{u}$ is $(r,p)$-quasi continuous. It is referred to as the \emph{quasi-sure redefinition} of $u$ and it is unique in the sense that the difference of two quasi-sure redefinitions of $u$ is zero $(r,p)$-quasi everywhere for all $r$ and $p$, \cite{Ma97}.

\begin{defn}\label{def7} Let $1\leq p<+\infty$ and let $r>0$ be and integer. For an open set $U\subset B$ define
\[\ccpct_{r,p}^{(W^\infty)}(U):=\inf\left\lbrace \left\|u\right\|_{W^{r,p}(B,\mu)}^p|\: u\in W^\infty, \: \text{$\widetilde{u}=1$ on $U$ q.s.}\right\rbrace,\] 
where $\widetilde{u}$ denotes the quasi-sure redefinition of $u$ with respect to the capacities from Definition \ref{def5}, and for an arbitrary set $A\subset B$,
$$\ccpct_{r,p}^{(W^\infty)}(A):=\inf\{\ccpct_{r,p}^{(W^\infty)}(U)|\: A\subset U,\, \text{$U$ open}\}.$$
\end{defn}
This definition may seem a bit odd because it refers to Definition \ref{def5}. However, for some applications capacities based on the algebra $W^\infty$ may be more suitable that those based on cylcindrical functions.

The following equivalence can be observed.

\begin{theo}\label{T:equiv}
Let $1< p<+\infty$ and let $r>0$ be an integer. Then there are positive constants $c_3$ and $c_4$ depending only on $p$ and $r$ such that for any set $A\subset B$ we have 
\begin{equation}\label{E:FCbinfty}
c_3\:\ccpct_{r,p}^{(\mathcal{F}C_b^\infty)}(A)\leq \cpct_{r,p}(A)\leq c_4\:\ccpct_{r,p}^{(\mathcal{F}C_b^\infty)}(A)\end{equation}
and 
\begin{equation}\label{E:Winfty}
c_3\:\ccpct_{r,p}^{(W^\infty)}(A)\leq \cpct_{r,p}(A)\leq c_4\:\ccpct_{r,p}^{(W^\infty)}(A).
\end{equation}
\end{theo}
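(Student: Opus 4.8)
The plan is to prove the equivalence by establishing the three inequalities
$$
\ccpct_{r,p}^{(\mathcal{F}C_b^\infty)}(A)\;\lesssim\;\cpct_{r,p}(A),
\qquad
\cpct_{r,p}(A)\;\lesssim\;\ccpct_{r,p}^{(W^\infty)}(A),
\qquad
\ccpct_{r,p}^{(W^\infty)}(A)\;\leq\;\ccpct_{r,p}^{(\mathcal{F}C_b^\infty)}(A),
$$
and then chaining them. Since the right-hand and left-hand estimates in both \eqref{E:FCbinfty} and \eqref{E:Winfty} are the same up to reshuffling, and since $\mathcal{F}C_b^\infty\subset W^\infty$ forces the trivial comparison $\ccpct_{r,p}^{(W^\infty)}\leq\ccpct_{r,p}^{(\mathcal{F}C_b^\infty)}$ (every admissible cylindrical competitor is also an admissible Watanabe competitor, noting that $u=1$ on $U$ pointwise implies $\widetilde u=1$ q.s.\ on $U$), the whole theorem reduces to the two nontrivial bounds above plus a reverse inequality tying $\ccpct^{(\mathcal{F}C_b^\infty)}$ back to $\cpct$. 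Because all three capacities are defined as infima over open sets approximating $A$, I would first prove everything for open $U$ and then pass to general $A$ automatically.

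For the easy direction $\cpct_{r,p}(A)\lesssim\ccpct_{r,p}^{(W^\infty)}(A)$, I would take an open $U$ and an admissible $u\in W^\infty$ with $\widetilde u=1$ q.s.\ on $U$. Writing $u=V_r f$ with $f=(I-\mathcal L)^{r/2}u\in L^p$ (possible since $W^{r,p}=V_r(L^p)$), I want $V_r f\geq 1$ $\mu$-a.e.\ on $U$, which follows from $\widetilde u=1$ q.s.\ on $U$ because a q.s.\ statement holds $(r,p)$-q.e.\ and hence $\mu$-a.e. Then $\cpct_{r,p}(U)\leq\|f\|_{L^p}^p=\|(I-\mathcal L)^{r/2}u\|_{L^p}^p\leq c_2^p\|u\|_{W^{r,p}}^p$ by Meyer's equivalence \eqref{E:equivnorm}. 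Taking the infimum over $u$ and then over $U\supset A$ gives the claim with $c_4=c_2^p$ (and hence $c_3=c_2^{-p}$ after inversion, but in fact this direction controls $\cpct$ from above, fixing one of the two constants).

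The substantial direction is $\ccpct_{r,p}^{(\mathcal{F}C_b^\infty)}(A)\lesssim\cpct_{r,p}(A)$: given an open $U$ and $f\in L^p$ with $V_r f\geq 1$ $\mu$-a.e.\ on $U$ and $\|f\|_{L^p}^p$ close to $\cpct_{r,p}(U)$, I must manufacture a \emph{cylindrical} function $u$ with $u=1$ on an open set containing (most of) $U$ and $\|u\|_{W^{r,p}}^p\lesssim\|f\|_{L^p}^p$. I may assume $f\geq 0$ since replacing $f$ by $|f|$ only increases $V_rf$ and does not increase $\|f\|_{L^p}$. The strategy is to start from the potential $V_r f$, which already exceeds $1$ on $U$, and apply a smooth nonlinear truncation $T\in C_b^\infty(\mathbb R)$ chosen so that $T(s)=1$ for $s\geq 1$ (or $s\geq 1/2$) and $T(s)=0$ near $s=0$; then $T(V_r f)=1$ on $\{V_r f\geq 1\}\supseteq U$ $\mu$-a.e.\ and, crucially, $\|T(V_r f)\|_{W^{r,p}}^p\lesssim\|f\|_{L^p}^p$ by the composition norm bound of Lemma \ref{lemma1}, which is exactly the boundedness of such truncation operators on potentials of nonnegative functions (this is where the Rota--Stein maximal function estimate and the multiplicative estimate of Lemma \ref{lemma_multest} enter). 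This $T(V_r f)$ lies in $W^{r,p}$ but is not literally cylindrical and equals $1$ only $\mu$-a.e.\ rather than on an open set, so two further steps are needed: first approximate $f$ by nonnegative cylindrical $f_n\to f$ in $L^p$, use $V_r(\mathcal{F}C_b^\infty)\subset\mathcal{F}C_b^\infty$ (Proposition \ref{P:preserve}) and stability of $\mathcal{F}C_b^\infty$ under composition with $C_b^\infty(\mathbb R)$ to get genuine cylindrical competitors $T(V_r f_n)$, and control the shortfall caused by the $L^p$-approximation error using the uniform Lipschitz bound and the Sobolev-norm estimate; second, handle the requirement ``$u=1$ on an \emph{open} set'': choose $T$ so that $T\equiv 1$ on $[1/2,\infty)$, observe that by quasi-continuity the set $\{\,V_r f_n>1/2\,\}$ contains an open set exhausting $U$ up to small capacity, and absorb the gap into an $\varepsilon$ with standard countable-subadditivity and open-approximation arguments. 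The main obstacle is precisely this last packaging: Lemma \ref{lemma1} delivers the norm bound but only $\mu$-a.e.\ level-set information, so the delicate point is upgrading ``$=1$ $\mu$-a.e.'' to ``$=1$ on an open neighbourhood'' while keeping the norm comparable, which forces the use of quasi-continuity and a two-threshold truncation together with the subadditivity of $\cpct_{r,p}$.

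Finally, once $\ccpct_{r,p}^{(\mathcal{F}C_b^\infty)}\lesssim\cpct_{r,p}\lesssim\ccpct_{r,p}^{(W^\infty)}\leq\ccpct_{r,p}^{(\mathcal{F}C_b^\infty)}$ is in hand, all four quantities are comparable, and reading off the constants from Meyer's equivalence and the truncation bound yields \eqref{E:FCbinfty} and \eqref{E:Winfty} with explicit $c_3,c_4$ depending only on $r$ and $p$.
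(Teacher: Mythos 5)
Your overall architecture is the paper's: the chain $\ccpct_{r,p}^{(W^\infty)}\leq\ccpct_{r,p}^{(\mathcal{F}C_b^\infty)}$, the easy bound $\cpct_{r,p}\leq c_2^p\,\ccpct_{r,p}$ via $W^{r,p}(B,\mu)=V_r(L^p(B,\mu))$ and Meyer's equivalence \eqref{E:equivnorm}, and the hard bound via a smooth two-level truncation of potentials combined with Lemma \ref{lemma1} and Proposition \ref{P:preserve}. The gap is exactly in the step you yourself flag as ``the last packaging''. You cover $U$ by the open set $G_n=\{V_rf_n>1/2\}$, on which $T(V_rf_n)=1$, plus an exceptional set of small $\cpct_{r,p}$, and then propose to ``absorb the gap with standard countable-subadditivity and open-approximation arguments''. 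Subadditivity of \emph{which} capacity? For $\ccpct_{r,p}^{(\mathcal{F}C_b^\infty)}$ no subadditivity is available: merging two competitors $u_1,u_2$ (equal to one on $U_1,U_2$) into one competitor for $U_1\cup U_2$ requires something like $1-(1-u_1)(1-u_2)$ or a truncation of $u_1+u_2$, and for $r\geq 2$ neither products nor compositions admit bounds in terms of the $W^{r,p}$-norms alone (Leibniz produces cross terms $D^ku_1\otimes D^{r-k}u_2$ that H\"older cannot close at exponent $p$, and Lemma \ref{lemma1} applies only to potentials of \emph{nonnegative} densities, which $u_1+u_2=V_r\bigl((I-\mathcal{L})^{r/2}(u_1+u_2)\bigr)$ is not); in fact subadditivity of $\ccpct^{(\mathcal{F}C_b^\infty)}$ is a consequence of the theorem, not a tool for proving it. If instead you mean subadditivity of $\cpct_{r,p}$, you run into circularity: knowing that the gap set has small $\cpct_{r,p}$ does not produce a \emph{cylindrical} function equal to one on an open neighborhood of it with small $W^{r,p}$-norm --- producing such functions is precisely the hard direction you are proving. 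Iterating your construction on successive gap sets does not terminate either: the natural combined competitor is an infinite sum of densities, which is no longer cylindrical.

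The paper closes this hole with Lemma \ref{new}, which you never invoke: $\cpct_{r,p}(U)$ equals the infimum of $\|f\|_{L^p}^p$ over $f\in\mathcal{F}C_{b,+}^\infty$ with $V_rf\geq 1$ \emph{everywhere} on $U$. The point of that lemma is that all patching is done at the level of potentials of nonnegative cylindrical densities, where subadditivity is cheap (one adds or maximizes densities, and positivity of $V_r$ makes the potentials cooperate), and the upgrade from ``$\geq 1$ $\mu$-a.e.'' to ``$\geq 1$ everywhere on an open set'' is obtained from a subsequence of cylindrical potentials converging uniformly off an open set of small capacity, together with continuity of cylindrical potentials and the fact that $\mathrm{supp}\,\mu=B$. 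Once that single everywhere-admissible density is in hand, the theorem follows by applying the truncation $T$ and Lemma \ref{lemma1} exactly once, with $c_3=c_T^{-p}$, $c_4=c_2^p$. So your proposal has the right skeleton and the right key lemma for the norm bound, but the step you describe as handled by ``standard'' arguments is the actual crux, and as sketched it would fail; it needs Lemma \ref{new} (or an equivalent potential-level patching argument) to become a proof.
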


Theorem \ref{T:equiv} is an analogue of corresponding results in finite dimensions, \cite[Theorem A]{AP}, \cite[Theorem 3.3]{Mazja72}, see also \cite[Section 2.7 and Corollary 3.3.4]{A} or \cite[Sections 13.3 and 13.4]{Mazja}.

One ingredient of our proof of Theorem \ref{T:equiv} is a bound in $W^{r,p}(B,\mu)$-norm for compositions with suitable smooth truncation functions. For the spaces $W^{1,p}(B,\mu)$ such a bound is clear from the chain rule for $D$ respectively from general Dirichlet form theory, see \cite{Bo}. Norm estimates in $W^{r,p}(B,\mu)$ for compositions $T\circ u$  of elements $u\in W^{r,p}(B,\mu)$ with suitable smooth functions $T:\mathbb{R}\to\mathbb{R}$ can be obtained via the chain rule. For instance, in the special case $r=2$ the chain and product rules and the definition of the generator $\mathcal{L}$ imply
\[\mathcal{L} T(u)= T'(u)\mathcal{L}u + T''(u)\left\langle Du, Du\right\rangle_H, \quad u\in W^{2,p}.\]
By (\ref{E:equivnorm}) it would now suffice to show a suitable bound for $\mathcal{L} T(u)$ in $L^p$, and the summand more difficult to handle is the one involving the first derivatives $Du$. In the finite dimensional Euclidean case an $L^p$-estimate for it follows immediately from a simple integration by parts argument, \cite[Theorem 3]{A76}, or by a use of a suitable Gagliardo-Nirenberg inequality, \cite{AP, Mazja72}. Integration by parts for Gaussian measures comes with an additional 'boundary' term involving the direction $h\in H$ of differentiation that spoiles the original trick, and the classical proof of the Gagliardo-Nirenberg inequality involves dimension dependent constants. A simple alternative approach, suitable for any integer $r>0$, is to prove truncation results for potentials in a similar way as in \cite[Theorem 3.3.3]{A}, so that a quick evaluation of the first order term above follows from estimates in terms of the maximal function, \cite[Proposition 3.1.8]{A}. This method can be made dimension independent if the Hardy-Littlewood maximal function is replaced by the maximal function in terms of the semigroup operators (\ref{E:Mehler}) in the sense of Rota and Stein, \cite[Theorem 3.3]{S}, \cite[Chapter III, Section 3]{St70}, see Lemma \ref{lemma_multest} below. We obtain the following variant of a Theorem due to Mazja and Adams, \cite[Theorems 2 and 3]{A76}, \cite[Theorem 3.3.3]{A}, now for Sobolev spaces $W^{r,p}(B,\mu)$ over abstract Wiener spaces.  A proof will be given in Section \ref{S:truncationproof} below.

\begin{lem}\label{lemma1} Assume $1<p<+\infty$ and let $r>0$ be an integer. Let $T\in C^\infty(\mathbb{R}^+)$ and suppose that $T$ satisfies 
\begin{equation}\label{E:hypoonT}
\sup_{t>0}|t^{i-1}T^{(i)}(t)|\leq L<\infty, \quad i=0,1,2,...
\end{equation}
Then for every nonnegative $f \in L^p(B,\mu)$ the function $T \circ V_{r} f$ is an element of $W^{r,p}(B,\mu)$, and there is a constant $c_T>0$ depending only on $p$, $r$ and $L$ such that for every nonnegative $f \in L^p(B,\mu)$ we have
\begin{equation}\label{E:composition}
\left\|T \circ V_{r} f\right\|_{W^{r,p}(B,\mu)}\leq c_T\left\|f\right\|_{L^p}.
\end{equation}
\end{lem}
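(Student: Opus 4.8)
The plan is to combine a Faà di Bruno expansion of the derivatives $D^k(T\circ V_rf)$ with a pointwise ``multiplicative'' estimate for the derivatives of potentials, and then to integrate using Hölder's inequality together with the $L^p$-boundedness of the maximal function. First I would reduce the statement to an a priori estimate for nonnegative $f$ in a dense convenient class: if $f\geq 0$ is smooth cylindrical then $V_rf\in\mathcal{F}C_b^\infty\subset W^\infty$ by Proposition \ref{P:preserve}, hence $T\circ V_rf\in W^\infty$ since $W^\infty$ is stable under composition with $C_b^\infty(\mathbb{R})$ functions ($T$ restricted to the range of $V_rf$ behaves like such a function by (\ref{E:hypoonT})). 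Once (\ref{E:composition}) is known for such $f$, the general case follows by approximation: for $f_n\to f$ in $L^p$ with $f_n\geq 0$ nice, the Lipschitz bound $|T'|\leq L$ and the $L^p$-contractivity of $V_r$ give $T\circ V_rf_n\to T\circ V_rf$ in $L^p$, while the uniform $W^{r,p}$-bound forces the limit into $W^{r,p}$ with the same bound by weak lower semicontinuity.

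For the a priori estimate set $u:=V_rf\geq 0$. The chain rule in the tensor spaces $\mathcal{H}_k$ yields, for each $0\leq k\leq r$, a bound of the form
\[\|D^k(T\circ u)\|_{\mathcal{H}_k}\leq \sum c_{k,j,\alpha}\,|T^{(j)}(u)|\prod_{i=1}^j\|D^{\alpha_i}u\|_{\mathcal{H}_{\alpha_i}},\]
the sum running over $1\leq j\leq k$ and $\alpha_1,\dots,\alpha_j\geq 1$ with $\alpha_1+\dots+\alpha_j=k$ (for $k=0$ the term is simply $T(u)$). Hypothesis (\ref{E:hypoonT}) gives $|T^{(j)}(u)|\leq L\,u^{1-j}$ on $\{u>0\}$, so the powers of $u$ will telescope against the factors $\|D^{\alpha_i}u\|$ once these are estimated. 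The crucial input is the multiplicative estimate of Lemma \ref{lemma_multest}, namely
\[\|D^m V_rf(x)\|_{\mathcal{H}_m}\leq c\,(V_rf(x))^{1-m/r}\,(\mathcal{M}f(x))^{m/r},\qquad 1\leq m\leq r-1,\]
where $\mathcal{M}f$ is a maximal function controlled in $L^p$ by the Rota--Stein inequality \cite[Theorem 3.3]{S}. Granting this, every factor with $\alpha_i\leq r-1$ is estimated this way, and since $\sum_i(1-\alpha_i/r)=j-k/r$ the product $u^{1-j}\prod_i\|D^{\alpha_i}u\|$ is bounded pointwise by $c\,(V_rf)^{1-k/r}(\mathcal{M}f)^{k/r}$. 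For $1\leq k\leq r-1$, and for the multi-block terms ($j\geq 2$) contributing to $k=r$, this gives the desired pointwise bound; the single-block term $T'(u)D^ru$ at $k=r$ is treated separately, bounding $|T'(u)|\leq L$ and using Meyer's equivalence (\ref{E:equivnorm}) to get $\|D^rV_rf\|_{L^p}\leq c\|f\|_p$. Finally the $k=0$ term satisfies $|T(u)|\leq Lu$, hence $\|T(V_rf)\|_p\leq L\|f\|_p$ by contractivity of $V_r$.

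It then remains to integrate. For each term of order $1\leq k\leq r-1$ (and the multi-block terms at $k=r$), Hölder's inequality with conjugate exponents $\frac{r}{r-k}$ and $\frac{r}{k}$ gives $\|(V_rf)^{1-k/r}(\mathcal{M}f)^{k/r}\|_p\leq\|V_rf\|_p^{1-k/r}\|\mathcal{M}f\|_p^{k/r}$, and the $L^p$-contractivity $\|V_rf\|_p\leq\|f\|_p$ together with $\|\mathcal{M}f\|_p\leq c\|f\|_p$ (Rota--Stein) closes the estimate; summing over $0\leq k\leq r$ yields (\ref{E:composition}). The main obstacle is Lemma \ref{lemma_multest} itself. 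To prove it I would split the subordination integral (\ref{E:Vr}) as $\int_0^s+\int_s^\infty$, use on the far part the crude bound $\|D^mP_tf\|_{\mathcal{H}_m}\leq ct^{-m/2}\mathcal{M}f$ with $t^{-m/2}\leq s^{-m/2}$ to recover $s^{-m/2}V_rf$, use the same smoothing bound on the near part to get $s^{(r-m)/2}\mathcal{M}f$ (finite since $m<r$), and optimize over $s$. The delicate point, where the Gaussian setting differs from the Euclidean one, is the derivative bound on the Mehler semigroup: Gaussian integration by parts produces an extra Hermite-type weight, so the naive right-hand side is $(P_t|f|^2)^{1/2}$ rather than $P_t|f|$; one absorbs this by working with $(P_t|f|^q)^{1/q}$ for a fixed $q\in(1,p)$ and invoking Rota--Stein on $L^{p/q}$, which is exactly what forces the maximal operator to be of semigroup (Rota--Stein) type and keeps all constants dimension independent. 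The borderline order $m=r$, where the near integral would diverge logarithmically, is precisely the case avoided above by handling $T'(u)D^ru$ through Meyer's equivalence instead.
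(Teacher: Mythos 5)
Your main-line argument is, in substance, the paper's own proof of Lemma \ref{lemma1}: the Fa\`a di Bruno expansion, the bound $|T^{(j)}(u)|\leq L\,u^{1-j}$ from (\ref{E:hypoonT}), the exponent count $\sum_i(1-\alpha_i/r)=j-k/r$ that telescopes the negative powers of $u=V_rf$ against the multiplicative estimate of Lemma \ref{lemma_multest}, the separate treatment of the single-block top-order term via $|T'|\leq L$ and Meyer's equivalence (\ref{E:equivnorm}), the order-zero bound $|T(u)|\leq Lu$, and the final integration using H\"older, the contractivity of $V_r$ and the Rota--Stein maximal inequality with a fixed $q\in(1,p)$. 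Granted Lemma \ref{lemma_multest} as a black box, this part is correct. (One secondary caveat: your claim that $T$ ``behaves like a $C_b^\infty(\mathbb{R})$ function on the range of $V_rf$'' is not justified, since the range of $V_rf$ can accumulate at $0$, where (\ref{E:hypoonT}) allows $T^{(i)}$, $i\geq 2$, to blow up like $t^{1-i}$; this is repairable, e.g.\ by working with $T(\cdot+\varepsilon)$, which satisfies (\ref{E:hypoonT}) with the same constant $L$.)

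The genuine gap is in your sketch of Lemma \ref{lemma_multest}, which you yourself identify as the main obstacle. On the far part $\int_s^\infty$ you assert that the crude bound $\|D^mP_tf\|_{\mathcal{H}_m}\leq c\,t^{-m/2}\mathcal{M}f$ ``recovers $s^{-m/2}V_rf$''. It does not: inserting it into (\ref{E:Vr}) recovers $s^{-m/2}\mathcal{M}f$, and in fact just $C\,\mathcal{M}f$ since $m<r$. To extract $V_rf$ from the far integral you would need a pointwise bound of the form $\|D^mP_tf\|_{\mathcal{H}_m}\leq c\,(1-e^{-2t})^{-m/2}\,P_tf$, which is false even for the finite-dimensional Ornstein--Uhlenbeck semigroup, because Mehler's derivative formula (Proposition \ref{P:CameronMartin}) carries the unbounded Gaussian weight $Q(\hat h_1,\dots,\hat h_m)$ inside the integral --- precisely the obstruction you acknowledge when you replace $P_tf$ by $(P_t(f^q))^{1/q}$. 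But once both halves of the split are bounded by $\mathcal{M}f$ alone, optimizing over $s$ yields only $\|D^mV_rf\|_{\mathcal{H}_m}\leq C\,\mathcal{M}f$, with no positive power of $V_rf$; then the factors $u^{1-j}$, $j\geq 2$, in the Fa\`a di Bruno expansion are uncontrolled where $u$ is small, and the whole proof collapses --- the factor $(V_rf)^{1-m/r}$ is the entire point of Lemma \ref{lemma_multest}. The paper produces this factor by a different mechanism: H\"older's inequality is applied to the \emph{joint} measure $t^{r/2-1}e^{-t}f(e^{-t}x+\sqrt{1-e^{-2t}}y)\,\mu(dy)\,dt$, writing $f|Q|=f^{1/\beta'}\cdot f^{1/\beta}|Q|$, so that the unweighted factor integrates to $(\Gamma(r/2)\,V_rf)^{1/\beta'}$; this is done with two different exponents, $\beta<r/k$ on the near piece and $\gamma>r/k$ on the far piece (so both singular $t$-integrals converge), a second H\"older in $y$ with exponent $q$ disposes of $|Q|$, and finally the cut point $\delta$ is chosen as in (\ref{E:desired}) to balance the two pieces into $(V_rf)^{1-k/r}\bigl(\sup_{t>0}P_t(f^q)\bigr)^{k/(qr)}$. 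If you cite Lemma \ref{lemma_multest} from the paper, your proof stands; as a self-contained argument it is missing this key step.
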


Another useful tool in our proof of Theorem \ref{T:equiv} is the following 'intermediate' description of $\cpct_{r,p}$. By $\mathcal{F}C^\infty_{b,+}$ we denote the cone of nonnegative elements of $\mathcal{F}C_b^\infty$. 

\begin{lem}\label{new}
Let $1\leq p<+\infty$ and let $r>0$ be an integer. For any open set $U\subset B$ we have 
\begin{equation}\label{E:description}
\cpct_{r,p}(U)=\inf\left\lbrace \left\|f\right\|_{L^p}^p|\: f\in \mathcal{F}C_{b,+}^\infty, \: \text{$V_rf\geq 1$ on $U$}\right\rbrace.
\end{equation}
\end{lem}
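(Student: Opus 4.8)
The plan is to prove Lemma \ref{new} by showing that the infimum on the right-hand side of (\ref{E:description}) equals $\cpct_{r,p}(U)$. One inequality is essentially trivial: since every $f\in\mathcal{F}C_{b,+}^\infty$ with $V_rf\geq 1$ on $U$ is in particular a nonnegative (hence $\mu$-a.e.\ admissible) competitor in Definition \ref{def5}, the infimum over the smaller cylindrical class can only be larger, so $\cpct_{r,p}(U)$ is bounded above by the right-hand side. The substance of the lemma is the reverse inequality, namely that restricting the competitors to nonnegative cylindrical functions does not increase the infimum.

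For the nontrivial direction I would start from an arbitrary $f\in L^p(B,\mu)$ with $V_rf\geq 1$ $\mu$-a.e.\ on $U$ and $\left\|f\right\|_{L^p}^p$ close to $\cpct_{r,p}(U)$, and produce a nonnegative cylindrical competitor with comparable norm. First I would reduce to nonnegative $f$: replacing $f$ by $|f|$ leaves the $L^p$-norm unchanged and, because $V_r$ is built from the positivity-preserving operators $P_t$ (see (\ref{E:Mehler}) and (\ref{E:Vr})), we have $V_r|f|\geq V_rf\geq 1$ $\mu$-a.e.\ on $U$, so we may assume $f\geq 0$. Next I would approximate such a nonnegative $f$ in $L^p(B,\mu)$ by nonnegative cylindrical functions $f_n\in\mathcal{F}C_{b,+}^\infty$; this is possible because $\mathcal{F}C_b^\infty$ is dense in $L^p(B,\mu)$ and one can arrange nonnegativity, for instance by composing with a smooth nonnegative truncation of the identity and using the stability of $\mathcal{F}C_b^\infty$ under composition with $C_b^\infty(\mathbb{R})$ functions. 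By the $L^p$-continuity of the contraction $V_r$, the potentials $V_rf_n$ converge to $V_rf$ in $L^p$, so after passing to a subsequence we may assume $V_rf_n\to V_rf$ $\mu$-a.e.

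The main obstacle is that $\mu$-a.e.\ convergence of $V_rf_n$ to $V_rf$ does not by itself guarantee that $V_rf_n\geq 1$ holds everywhere on the \emph{open} set $U$, which is what the membership condition in (\ref{E:description}) literally requires; the inequality $V_rf\geq 1$ is only known $\mu$-a.e. To bridge this gap I would exploit the slack in the strict inequality: since $U$ is open and we only need $V_rf\geq 1$ $\mu$-a.e.\ there, I would work with $(1+\delta)f$ for small $\delta>0$ so that $V_r\big((1+\delta)f\big)\geq 1+\delta$ $\mu$-a.e.\ on $U$, then choose the cylindrical approximant $f_n$ close enough in $L^p$ that the set where $V_rf_n$ fails to reach $1$ on $U$ has small capacity, and finally absorb this exceptional set into the open neighborhood using the outer-regularity structure of Definition \ref{def5}; the factor $(1+\delta)$ costs only $(1+\delta)^p$ in the norm, which can be sent to $1$. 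The cleanest route is to observe that the right-hand side of (\ref{E:description}) is already an infimum over open sets $U$ through Definition \ref{def5}'s second clause, so it suffices to handle fixed open $U$ and let $\delta\to 0$ at the end, giving $\left\|f_n\right\|_{L^p}^p\to\left\|f\right\|_{L^p}^p$ and hence the desired reverse inequality.
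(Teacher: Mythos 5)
Your easy direction and your first reductions (passing to $|f|$, approximating a nonnegative $f\in L^p(B,\mu)$ by functions $f_n\in\mathcal{F}C_{b,+}^\infty$ in $L^p$, exploiting the slack $1+\delta$) coincide with the paper's proof. The genuine gap is in the step you yourself flag as the main obstacle. You propose to choose $f_n$ so close in $L^p$ that ``the set where $V_rf_n$ fails to reach $1$ on $U$ has small capacity'' and then to ``absorb'' this set. Small in \emph{which} capacity? A Chebyshev-type bound applied to $V_r\bigl((1+\delta)f-f_n\bigr)$ only gives smallness of the exceptional open set $E_n=U\cap\{V_rf_n<1\}$ in the sense of $\cpct_{r,p}$, i.e. with general $L^p$ competitors. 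But to absorb $E_n$ into the infimum on the right-hand side of (\ref{E:description}) you must produce a \emph{nonnegative cylindrical} function whose potential is $\geq 1$ everywhere on a neighborhood of $E_n$, with $L^p$-norm controlled by $\cpct_{r,p}(E_n)$ --- and that is exactly the statement of Lemma \ref{new} applied to the open set $E_n$. As written the argument is circular (or, if iterated, produces an infinite regress in which the exceptional sets never disappear). Outer regularity in Definition \ref{def5} is not the right mechanism here; what is needed is subadditivity of the cylindrical quantity, i.e. the ability to add competitors from $\mathcal{F}C_{b,+}^\infty$, together with an estimate of the exceptional set in terms of \emph{that} quantity.

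The paper escapes this circle with tools your sketch does not use. First, it proves the weak-type estimate (\ref{E:weakineq}) directly for the cylindrical capacity $\cpct'_{r,p}$: this works because for $g\in\mathcal{F}C_b^\infty$ the potential $V_rg$ is again cylindrical and continuous (Proposition \ref{P:preserve}), so $\{|V_rg|>R\}$ is open and $|g|/R$ can be smoothed into an admissible cylindrical competitor. Second, it applies this estimate not to $V_rf_n-V_r\bigl((1+\delta)f\bigr)$ --- which is not the potential of a cylindrical function, so no such estimate is available --- but to the differences $V_r(f_{n_{i+1}}-f_{n_i})$ along a rapidly convergent subsequence. This yields an Egorov-type conclusion: \emph{uniform} convergence of $u_{n_i}=V_rf_{n_i}$ outside an open set $A_l$ of small $\cpct'_{r,p}$-capacity. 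Finally, the uniform bound on $A_l^c$, the continuity of $u_{n_j}$, and the fact that $\mu$ has full topological support upgrade the $\mu$-a.e. inequality $u\geq 1+\delta$ on $U$ to the everywhere inequality $u_{n_j}\geq 1$ on an open set $V$ with $U\subset V\cup A_l$; then $f_{n_j}$ handles $V$, and the definition of $\cpct'_{r,p}$ itself handles $A_l$. Your a.e.-convergent subsequence cannot substitute for this chain: sets of $\mu$-measure zero may have positive capacity, so pointwise $\mu$-a.e. control never yields everywhere control on an open set. To repair your proof you need precisely these three missing ingredients: a weak-type estimate for the cylindrical capacity, quasi-uniform convergence along a subsequence, and the continuity/full-support argument.
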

Due to Proposition \ref{P:preserve} the right hand side in (\ref{E:description}) makes sense. The lemma can be proved using standard techniques, we partially follow \cite[III. Proposition 3.5]{M}.

\begin{proof}
For $U\subset B$ open let the right hand side of (\ref{E:description}) be denoted by $\cpct_{r,p}'(U)$. Then clearly 
\begin{equation}\label{E:weakineq}
\cpct'_{r,p}(\{|V_rf|>R\})\leq R^{-p}\left\|f\right\|_{L^p}^p
\end{equation}
for all $f\in \mathcal{F}C_b^\infty$ and $R>0$. 

Now let $U\subset B$ open be fixed. The value of $\cpct_{r,p}(U)$ does not change if in its definition we require that $V_rf\geq 1+\delta$ $\mu$-a.e. on $U$ with an arbitrarily small number $\delta>0$. It does also not change if in addition we consider only nonnegative $f\in L^p$ in the definition: For any $f\in L^p$ the positivity and linearity of $V_r$ imply that $(V_r f)^+\leq V_r(f^+)$. Consequently, if $f\in L^p$ is such that $V_r f\geq 1+\delta$ $\mu$-a.e. on $U$, then also $V_r(f^+)\geq 1+\delta$ $\mu$-a.e. on $U$, and clearly $\left\|f^+\right\|_{L^p}\leq \left\|f\right\|_{L^p}$. 

Given $\varepsilon>0$ choose a nonnegative function $f\in L^p(B,\mu)$ such that $u:=V_rf\geq 1+\delta$ $\mu$-a.e. on $U$ with some $\delta>0$ and
\[\left\|f\right\|_{L^p}^p\leq \cpct_{r,p}(U)+\frac\varepsilon3.\]
Approximating $f$ by bounded nonnegative functions in $L^p(B,\mu)$, taking their cylindrical approximations, which are nonnegative as well, and smoothing by convolution in finite dimensional spaces, we can approximate $f$ in $L^p(B,\mu)$ by a sequence of nonnegative functions $(f_n)_{n=1}^\infty\subset\mathcal{F}C_{b,+}^{\infty}$, see for instance \cite[Chapter II, Theorem 5.1]{Ma97} or \cite[Theorem 7.4.5]{LMPnotes}. Clearly the functions $u_n:=V_rf_n$ satisfy $\lim_n u_n=u$ in $W^{r,p}(B,\mu)$.

By (\ref{E:weakineq}) and the convergence in $W^{r,p}(B,\mu)$ we can now choose a subsequence $(u_{n_i})_{i=1}^\infty$ such that
\[\cpct'_{r,p}(\{|u_{n_{i+1}}-u_{n_i}|>2^{-i}\})\leq 2^{-i}\quad \text{ and }\quad \left\|u_{n_{i+1}}-u_{n_i}\right\|_{L^p}\leq 2^{-2i}\]
for all $i=1,2,...$ For any $k=1,2,...$ let now 
\[A_k:=\bigcup_{i\geq k}\{|u_{n_{i+1}}-u_{n_i}|>2^{-i}\},\quad k=1,2,...\]
Then for each $k$ the sequence $(u_{n_i})_{i=1}^\infty$ is Cauchy in supremum norm on $A_k^c$. On the other hand, 
\[\mu(\{|u_{n_{i+1}}-u_{n_i}|>2^{-i}\})\leq 2^{-ip},\]
so that $\mu(A_k)\leq \sum_{i=k}^\infty 2^{-ip}$, what implies 
\[\mu\left(\bigcap_{k=1}^\infty A_k\right)=\lim_{k\to\infty} \mu(A_k)=0.\]
Consequently, setting $\overline{u}(x):=\lim_{n\to\infty} u_n(x)$ for all $x\in \bigcup_{k=1}^\infty A_k^c$ and $\overline{u}(x)=0$ for all other $x$, we obtain a $\mu$-version $\overline{u}$ of $u$. 

Now choose $l$ such that $\cpct'_{r,p}(A_l)<\frac\varepsilon3$ and then $j$ large enough so that 
\[\left\|f_{n_j}-f\right\|_{L^p}^p<\frac\varepsilon3\quad \text{and}\quad \sup_{x\in A_l^c}|u_{n_j}(x)-\overline{u}(x)|<\delta/2.\] 
Then $u_{n_j}\geq 1$ $\mu$-a.e. on some neighborhood $V$ of $U\cap A_l^c$. The topological support of $\mu$ is $B$, see for instance \cite[Theorem 3.6.1, Definition 3.6.2 and the remark following it]{B1}. Since $u_{n_j}$ is continuous by Proposition \ref{P:preserve} we therefore have $u_{n_j}\geq 1$ everywhere on $V$. Now, since $\cpct'_{r,p}$ is clearly subadditive and monotone,
\begin{align}
\cpct_{r,p}(U)\leq \cpct'_{r,p}(U)\leq \cpct'_{r,p}(V)+\cpct'_{r,p}(A_l)
&\leq \left\|f_{n_j}\right\|_{L^p}^p+\frac{\varepsilon}{3}\notag\\
&\leq\left\|f\right\|_{L^p}^p+\frac{2\varepsilon}{3}\notag\\
&\leq \cpct_{r,p}(U)+\varepsilon.\notag
\end{align}

\end{proof}

Using Lemmas \ref{lemma1} and \ref{new} we can now verify Theorem \ref{T:equiv}.

\begin{proof} 
We show (\ref{E:FCbinfty}). It suffices to consider open sets $U$. Since $\mathcal{F}C_b^\infty \subset W^{r,p}(B,\mu)$, we have
\[\cpct_{r,p}(U)\leq c_2^p\ccpct_{r,p}^{(\mathcal{F}C_b^\infty)}(U)\] 
with $c_2$ as in (\ref{E:equivnorm}), so that it suffices to show
\[\ccpct_{r,p}^{(\mathcal{F}C_b^\infty)}(U)\leq c \cpct_{r,p}(U)\]
with a suitable constant $c>0$ depending only on $r$ and $p$.

Let $T\in C^{\infty}(\mathbb{R})$ be a function such that $0\leq T\leq 1,\, T(t)=0$ for $0\leq t \leq 1/2$ and $T(t)=1$ for $t\geq 1$, and let $c_T$ be as in Lemma \ref{lemma1}. Given $\epsilon>0$, let $f\in\mathcal{F}C_{b,+}^\infty$ be such that $u:=V_rf\geq 1$ on $U$ and
\[\left\|f\right\|_{L^p}^p\leq \cpct_{r,p}(U)+\frac{\varepsilon}{c_T^p},\]
due to Lemma \ref{new} such $f$ can be found.  Clearly $T\circ u\in \mathcal{F}C_b^\infty$ and $T\circ u=1$ on $U$. Therefore, using Lemma \ref{lemma1},
we have 
\[\ccpct_{r,p}^{(\mathcal{F}C_b^\infty)}(U)\leq \left\|T\circ u\right\|_{W^{r,p}(B,\mu)}^p\leq c_T^p\left\|f\right\|_{L^p}^p\leq c_T^p\cpct_{r,p}(U)+\varepsilon,\]
and we arrive at (\ref{E:FCbinfty}) with $c_3:=1/c_T^p$ and $c_4:=c_2^p$. Since $\mathcal{F}C_b^\infty\subset W^{\infty}\subset W^{r,p}(B,\mu)$, (\ref{E:Winfty}) is an easy consequence.

\end{proof}

\section{Smooth truncations}\label{S:truncationproof}

To verify Lemma \ref{lemma1} we begin with the following generalization of \cite[formula (5.4.4) in Proposition 5.4.8]{B1}.  

\begin{prop}\label{P:CameronMartin}
Assume $p>1$ and $f\in L^p(B,\mu)$. Then for any $t>0$ and $\mu$-a.e. $x\in B$ the mapping $h\mapsto P_tf(x+h)$ from $H$ to $B$ is infinitely Fr\'echet differentiable, and given $h_1,...,h_k\in H$ we have 
\begin{multline}
\partial_{h_1}\cdots\partial_{h_k}P_t f(x)\notag\\
=\left(\dfrac{e^{-t}}{\sqrt{1-e^{-2t}}}\right)^{k}\int_{B}f(e^{-t}x+\sqrt{1-e^{-2t}}y)Q(\hat{h}_1(y),...,\hat{h}_k(y))\mu(dy),
\end{multline}
where the functions $\hat{h}_i$ are as in (\ref{E:isometry}) and $Q:\mathbb{R}^n\to \mathbb{R}$, $n\leq k$, is a polynomial of degree $k$ whose coefficients are constants or products of scalar products $\left\langle h_i, h_j \right\rangle_H$. 
If the  $h_1,...,h_k$ are elements of an orthonormal system $(g_i)_{i=1}^k$ in $H$, not necessarily distinct, then each coefficient of $Q$ depends only on the multiplicity according to which the respective element of $(g_i)_{i=1}^k$ occurs in $\left\lbrace h_1,...,h_k\right\rbrace$.
\end{prop}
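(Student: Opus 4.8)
The plan is to move the entire $h$-dependence out of the (merely $L^p$) function $f$ and into a smooth Gaussian weight by means of the Cameron--Martin theorem, after which differentiation becomes an elementary, if combinatorially involved, computation. Writing $\alpha:=\sqrt{1-e^{-2t}}$ and starting from the Mehler formula (\ref{E:Mehler}), I would first record
\[
P_tf(x+h)=\int_B f\big(e^{-t}x+e^{-t}h+\alpha y\big)\,\mu(dy).
\]
Substituting $k:=\tfrac{e^{-t}}{\alpha}h$ and applying the Cameron--Martin formula $\int_B g(y+k)\mu(dy)=\int_B g(y)\exp(\hat k(y)-\tfrac12\|k\|_H^2)\mu(dy)$ to $g(y):=f(e^{-t}x+\alpha y)$, and using the linearity of $h\mapsto\hat h$ from (\ref{E:isometry}), this becomes
\[
P_tf(x+h)=\int_B f\big(e^{-t}x+\alpha y\big)\,\exp\!\Big(\beta\,\hat h(y)-\tfrac12\beta^2\|h\|_H^2\Big)\,\mu(dy),\qquad \beta:=\frac{e^{-t}}{\alpha}.
\]
Crucially, the argument of $f$ no longer depends on $h$; all $h$-dependence now sits in the smooth exponential weight $E(h,y):=\exp(\beta\hat h(y)-\tfrac12\beta^2\|h\|_H^2)$.

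Next I would justify infinite Fr\'echet differentiability together with differentiation under the integral sign. For $\mu$-a.e.\ $x$ the function $y\mapsto f(e^{-t}x+\alpha y)$ lies in $L^p(B,\mu)$, since $P_t|f|^p(x)<\infty$ $\mu$-a.e.; the $h$-derivatives of $E$ are $E$ times polynomials in $\hat h(y)$ whose coefficients are polynomials in $\hat{h}_i(y)$ and in $\|h\|_H^2$, and all such Gaussian polynomials lie in $L^{q}(B,\mu)$ for every $q<\infty$. H\"older's inequality with exponents $p$ and $p'$, together with the local (in $h$) $L^{p'}$-bound for $E$ and its derivatives, then dominates the difference quotients and legitimizes repeated differentiation under the integral. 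Evaluating at $h=0$ is the combinatorial heart: since $\phi(h):=\beta\hat h(y)-\tfrac12\beta^2\|h\|_H^2$ is quadratic in $h$, the iterated directional derivatives of $E=e^{\phi}$ are $E$ times sums of products of the first derivatives $\partial_{h_i}\phi$ and the constant second derivatives $\partial_{h_i}\partial_{h_j}\phi=-\beta^2\langle h_i,h_j\rangle_H$, while all higher derivatives of $\phi$ vanish. At $h=0$ one has $E=1$ and $\partial_{h_i}\phi=\beta\hat{h}_i(y)$, so the derivative is a sum over partitions of $\{1,\dots,k\}$ into singletons and pairs,
\[
\partial_{h_1}\cdots\partial_{h_k}E\big|_{h=0}=\sum_{\pi}\ \prod_{\{i\}\in\pi}\beta\,\hat{h}_i(y)\ \prod_{\{i,j\}\in\pi}\big(-\beta^2\langle h_i,h_j\rangle_H\big).
\]
Each partition contributes exactly $k$ factors of $\beta$ (one per singleton, two per pair), so $\beta^k$ factors out, producing precisely the prefactor $(e^{-t}/\sqrt{1-e^{-2t}})^{k}$ and leaving a polynomial $Q$ of degree $k$ in the variables $\hat{h}_i(y)$ whose leading term is $\prod_i\hat{h}_i(y)$ and whose remaining coefficients are, up to sign, products of the scalar products $\langle h_i,h_j\rangle_H$, as claimed.

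Finally, for $h_1,\dots,h_k$ drawn from an orthonormal system $(g_i)_{i=1}^k$ one has $\langle h_i,h_j\rangle_H\in\{0,1\}$, equal to $1$ exactly when $h_i$ and $h_j$ are the same member of the system; hence only pairs of indices sharing a common member survive in the pairing sum, and the coefficients of $Q$ depend solely on the multiplicities with which the members of $(g_i)_{i=1}^k$ appear among $h_1,\dots,h_k$. I expect the combinatorial identification of $Q$ to be routine (the cases $k=1,2$ already exhibit the Hermite pattern $\hat{h}_1\hat{h}_2-\langle h_1,h_2\rangle_H$), and the first-order case to be covered by \cite[Proposition 5.4.8]{B1}. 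The genuine technical point, and the main obstacle, is the a.e.-in-$x$ justification of Fr\'echet differentiability of all orders and the interchange of differentiation and integration, which rests on the $L^p$--$L^{p'}$ H\"older estimate and the finiteness of $P_t|f|^p$ off a $\mu$-null set.
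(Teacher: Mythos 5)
Your proposal is correct and follows essentially the same route as the paper: the paper likewise uses the Cameron--Martin formula to rewrite $P_tf(x+h)$ with exactly your weight $\varrho(t,h,y)=\exp\{\beta\hat h(y)-\tfrac12\beta^2\|h\|_H^2\}$, $\beta=e^{-t}/\sqrt{1-e^{-2t}}$ (citing \cite[Proposition 5.4.8]{B1} for the a.e.\ infinite differentiability), and then differentiates that weight in the directions $h_1,\dots,h_k$ at $h=0$. Your explicit partition-into-singletons-and-pairs evaluation and the H\"older domination argument for differentiating under the integral are precisely the details the paper compresses into ``by the same arguments'' and ``a straightforward calculation.''
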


\begin{proof}
The infinite differentiability was shown in \cite[Proposition 5.4.8]{B1} as a consequence of the Cameron-Martin formula. By the same arguments we can see that 
\begin{multline}
\partial_{h_1}\cdots\partial_{h_k}P_t f(x)=\int_B f(e^{-t}x+\sqrt{1-e^{-2t}}y)\times\notag\\
\times\frac{\partial^k}{\partial\lambda_1\cdots\partial\lambda_k}\varrho(t,\lambda_1h_1+...+\lambda_kh_k,y)|_{\lambda_1=...=\lambda_k=0}\:\mu(dy),
\end{multline}
where 
\[\varrho(t,h,y)=\exp\left\lbrace\frac{e^{-t}}{\sqrt{1-e^{-2t}}}\hat{h}(y)-\frac{e^{-2t}}{2(1-e^{-2t})}\left\|h\right\|_H^2\right\rbrace.\]
A straightforward calculation shows that 
\begin{multline} 
\frac{\partial^k}{\partial\lambda_1\cdots\partial\lambda_k}\varrho(t,\lambda_1h_1+...+\lambda_kh_k,y)|_{\lambda_1=...=\lambda_k=0} \notag\\
= \left(\dfrac{e^{-t}}{\sqrt{1-e^{-2t}}}\right)^{k}\:Q(\hat{h}_1(y),...,\hat{h}_k(y))
\end{multline}
with a polynomial $Q$ as stated.
\end{proof}

The next inequality is a counterpart to \cite[Proposition 3.1.8]{A}. It provides a pointwise multiplicative estimate for derivatives of potentials in terms of powers of the potential and a suitable maximal function. 

\begin{lem}\label{lemma_multest}
Let $1<q<+\infty$, let $r>0$ be an integer and let $k<r$. Then for any nonnegative Borel function $f$ on $B$ and all $x\in B$ we have 
\begin{equation}\label{E:multest}
\|D^k V_rf(x)\|_{\mathcal{H}_k}\leq c(k,q,r)\:\left(V_rf(x)\right)^{1-\frac{k}{r}}\left(\sup_{t>0} P_t(f^{q})(x)\right)^{\frac{k}{r q}}.
\end{equation}
\end{lem}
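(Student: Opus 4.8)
The plan is to differentiate the subordination formula \eqref{E:Vr} under the integral sign, estimate the $\mathcal{H}_k$-norm of each $D^kP_tf(x)$ by a multiplicative bound for fixed time, and then carry out a Hedberg-type splitting of the $t$-integral, following the Euclidean argument of \cite[Theorem 3.3.3]{A} but with the Hardy--Littlewood maximal function replaced by the Rota--Stein maximal function $t\mapsto P_t(f^q)(x)$.

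First I would write, for $f\ge 0$ and $k<r$,
\[
D^kV_rf(x)=\frac{1}{\Gamma(r/2)}\int_0^\infty t^{r/2-1}e^{-t}\,D^kP_tf(x)\,dt
\]
as an $\mathcal{H}_k$-valued Bochner integral; differentiation under the integral is justified by Proposition \ref{P:CameronMartin} together with the pointwise bounds obtained below and dominated convergence (the factor $t^{(r-k)/2-1}$ is integrable at $0$ precisely because $k<r$). By Proposition \ref{P:HSnormestimate} it then suffices to control $|\partial_{h_1}\cdots\partial_{h_k}P_tf(x)|$ with $h_1,\dots,h_k$ ranging over orthonormal systems in $H$, since $\|D^kP_tf(x)\|_{\mathcal{H}_k}\le 2k^k\sup|\partial_{h_1}\cdots\partial_{h_k}P_tf(x)|$.

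For such a system Proposition \ref{P:CameronMartin} gives
\[
\partial_{h_1}\cdots\partial_{h_k}P_tf(x)=\Big(\tfrac{e^{-t}}{\sqrt{1-e^{-2t}}}\Big)^{k}\int_B f(e^{-t}x+\sqrt{1-e^{-2t}}\,y)\,Q(\hat h_1(y),\dots,\hat h_k(y))\,\mu(dy).
\]
The decisive step is to bound the right-hand integral. Writing $f=f^{1-k/r}\cdot f^{k/r}$ and applying H\"older's inequality twice — first with exponents $(\tfrac{r}{r-k},\tfrac{r}{k})$ to peel off $(P_tf(x))^{1-k/r}$, then with exponents $(q,q')$ to the remaining factor — I obtain
\[
\Big|\int_B f(\cdots)\,Q(\hat h(y))\,\mu(dy)\Big|\le c\,(P_tf(x))^{1-k/r}\,(P_t(f^q)(x))^{k/(rq)},
\]
where $c=\big(\int_B|Q|^{rq'/k}\,d\mu\big)^{k/(rq')}$ is a finite Gaussian moment of a polynomial and, crucially, \emph{uniform} over all orthonormal systems: by the last assertion of Proposition \ref{P:CameronMartin} the coefficients of $Q$ depend only on the multiplicity pattern of the $h_i$, so only finitely many such moments occur. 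Since $\tfrac{e^{-t}}{\sqrt{1-e^{-2t}}}\le c\,t^{-1/2}$ for all $t>0$, this yields the fixed-time estimate
\[
\|D^kP_tf(x)\|_{\mathcal{H}_k}\le c\,t^{-k/2}\,(P_tf(x))^{1-k/r}\,(P_t(f^q)(x))^{k/(rq)},
\]
whose exponents $1-\tfrac{k}{r}$ and $\tfrac{k}{rq}$ already match those in \eqref{E:multest}. Inserting this into the $t$-integral and abbreviating $u:=V_rf(x)$ and $M:=\sup_{t>0}P_t(f^q)(x)$, I would split at a free parameter $\delta>0$: on $(0,\delta)$ I bound both potential factors by $M$ via $P_tf(x)\le M^{1/q}$ and $P_t(f^q)(x)\le M$, contributing a term of order $\delta^{(r-k)/2}M^{1/q}$; on $(\delta,\infty)$ I use the subordination-kernel comparison $t^{(r-k)/2-1}\le\delta^{-k/2}t^{r/2-1}$ together with H\"older in $t$ to rebuild the order-$r$ potential $u=V_rf(x)$. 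Balancing the two contributions over $\delta$ should produce the geometric mean $u^{1-k/r}M^{k/(rq)}$, i.e. \eqref{E:multest}.

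I expect the hard part to be exactly this last $t$-integration: extracting the \emph{precise} exponents $1-\tfrac{k}{r}$ and $\tfrac{k}{rq}$. In the Euclidean setting the analogous step is painless because the Bessel kernel satisfies the pointwise domination $|\nabla^kG_r|\le cG_{r-k}$, which lowers the order of the potential cleanly before one interpolates; the Ornstein--Uhlenbeck semigroup admits no such derivative-kernel domination, since its Mehler kernel couples the contraction $e^{-t}x$ of the base point with the spread $\sqrt{1-e^{-2t}}$, so a wider Gaussian cannot be matched by any single $P_s$. This is precisely why the estimate must be phrased through the maximal function of $f^q$ rather than of $f$, and why the splitting has to be arranged so that the small-time regime is absorbed into $M$ while only the large-time regime rebuilds $V_rf$; keeping the two contributions from interfering — and avoiding a spurious logarithmic factor at the borderline exponent $\theta=k/r$ — is the delicate point on which the argument stands or falls. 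A secondary technical point, handled in the second paragraph, is the uniform control of the polynomial moments $\int_B|Q|^{rq'/k}\,d\mu$ through the multiplicity structure of Proposition \ref{P:CameronMartin}.
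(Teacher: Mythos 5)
Your differentiation under the integral, your fixed-time multiplicative bound (double H\"older with exponents $(\tfrac{r}{r-k},\tfrac{r}{k})$ and $(q,q')$, plus the uniform control of the Gaussian moments of $Q$ via the multiplicity structure), and your small-time estimate are all correct, and up to that point you run parallel to the paper. The gap is exactly where you predicted it: the large-time term, and it is fatal to the argument as you describe it. With the comparison $t^{(r-k)/2-1}\le\delta^{-k/2}t^{r/2-1}$ on $(\delta,\infty)$ followed by H\"older in $t$ to rebuild the potential, the large-time contribution is
\[
\delta^{-k/2}\,u^{1-k/r}M^{k/(rq)},\qquad u:=V_rf(x),\quad M:=\sup_{t>0}P_t(f^q)(x),
\]
and no choice of $\delta$ recovers \eqref{E:multest}. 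Balancing it against the small-time term $\delta^{(r-k)/2}M^{1/q}$ gives the minimum $\bigl(uM^{-1/q}\bigr)^{(1-k/r)^2}M^{1/q}$: the exponent on $u$ is $(1-k/r)^2$, not $1-k/r$, and since $uM^{-1/q}\le 1$ by Jensen this is strictly weaker, the ratio to the desired bound blowing up like $(M^{1/q}/u)^{k(r-k)/r^2}$ as $u\to 0$. Choosing instead $\delta\approx(uM^{-1/q})^{2/r}$ (the analogue of the paper's choice) makes the small-time term come out right but turns the large-time term into $u^{1-2k/r}M^{2k/(qr)}$, again too big. Even the sharper H\"older in $t$ (weight $t^{-1}e^{-t}$ in the second slot) only improves $\delta^{-k/2}$ to $(\log(1/\delta))^{k/r}$ — precisely the spurious logarithm you flagged. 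The root cause is structural: your fixed-time estimate commits to the borderline H\"older exponent $r/k$ at every single $t$, and a borderline exponent cannot survive the subsequent time integration.

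The paper's proof never uses the exponent $r/k$. It applies H\"older directly to the double integral over $(0,\delta)\times B$ and $(\delta,\infty)\times B$, with an exponent $\beta\in[\tfrac{r}{2k},\tfrac rk)$ strictly below the borderline on the small-time piece and $\gamma>\tfrac rk$ strictly above it on the large-time piece, keeping the full weight $e^{-t}t^{r/2-1}f(e^{-t}x+\sqrt{1-e^{-2t}}\,y)$ in one H\"older slot so that it integrates to $u$ itself rather than to a fractional power of a time-localized potential. This yields bounds of the form $c\,(1-e^{-2\delta})^{r/(2\beta)-k/2}\,u^{1/\beta'}M^{1/(\beta q)}$ and $c\,(1-e^{-2\delta})^{r/(2\gamma)-k/2}\,u^{1/\gamma'}M^{1/(\gamma q)}$, where the power of $(1-e^{-2\delta})$ is positive on one piece and negative on the other; substituting $1-e^{-2\delta}=u^{2/r}/(2M^{2/(qr)})$ then produces exactly $u^{1-k/r}M^{k/(rq)}$ from \emph{both} pieces, for any admissible $\beta,\gamma$. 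If you want to keep your pointwise-in-$t$ organization you can, but you must prove two different fixed-time bounds — one with exponents $(1/\beta',1/(\beta q))$ used for $t<\delta$, one with $(1/\gamma',1/(\gamma q))$ used for $t>\delta$ — and let the exponents $1-k/r$ and $k/(rq)$ emerge only from the final choice of $\delta$, not from the fixed-time estimate.
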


Note that lemma \ref{lemma_multest} is interesting only for $r\geq 2$. 

\begin{proof} 
Suppose $h_1,...,h_k\in H$ are members of an orthonormal system in $H$, not necessarily distinct. Then for any $\delta>0$ we have, by dominated convergence, 
\begin{align} 
D^k V_{r}f(x)&(h_1,...,h_k)\notag\\
&=\partial_{h_1}\cdots\partial_{h_k}V_rf(x)\notag\\
& = \int^{\delta}_0\int_B e^{-t}t^{r/2-1}\left(\dfrac{e^{-t}}{\sqrt{1-e^{-2t}}}\right)^k f(e^{-t}x+\sqrt{1-e^{-2t}}y)\times\notag\\
&\hspace{80pt} \times Q(\hat{h}_{1}(y),...,\hat{h}_{k}(y))\:\mu(dy)dt \notag\\ 
& +  \int^{\infty}_{\delta}\int_B e^{-t}t^{r/2-1}\left(\dfrac{e^{-t}}{\sqrt{1-e^{-2t}}}\right)^k f(e^{-t}x+\sqrt{1-e^{-2t}}y)\times\notag\\
&\hspace{80pt} \times Q(\hat{h}_{1}(y),...,\hat{h}_{k}(y))\:\mu(dy)dt\notag\\
&:=I_1(\delta)+I_2(\delta)\notag
\end{align}
with a polynomial $Q$ of degree $k$ as in Proposition \ref{P:CameronMartin}. Now let $\beta>1$ be a real number such that
\begin{equation}\label{E:parameters}
\frac{r}{2k}\leq \beta <\frac{r}{k}.
\end{equation}
H\"older's inequality yields
\begin{align}\label{E:estimateI1}
|I_1(\delta)|&\leq \left(\int^{\delta}_0\int_B e^{-t}t^{r/2-1}\left(\dfrac{e^{-t}}{\sqrt{1-e^{-2t}}}\right)^{\beta k}   
f(e^{-t}x+\sqrt{1-e^{-2t}}y)\times\right.\notag\\
&\hspace{80pt} \left.\times |Q(\hat{h}_{1}(y),...,\hat{h}_{k}(y))|^\beta \mu(dy)dt\right)^{1/\beta}\times\\
&\hspace{60pt}\left( \int^{\delta}_0\int_B e^{-t}t^{r/2-1}  f(e^{-t}x+\sqrt{1-e^{-2t}}y)\:\mu(dy)dt \right)^{1/\beta'}. \notag
\end{align}
Using the elementary inequality $e^{-t}t\leq 1-e^{-2t} $ for $t\geq 0$ and (\ref{E:parameters}), 
\[ e^{-t}t^{r/2-1}\left(\dfrac{e^{-t}}{\sqrt{1-e^{-2t}}}\right)^{\beta k}\leq (1-e^{-2t})^{r/2-k\beta/2-1}\:e^{-2t},\]
so that another application of H\"older's inequality, now with $q$, shows that the first factor on the 
right hand side of (\ref{E:estimateI1}) is bounded by 
\begin{align}
&\left(\int^{\delta}_0\int_B  f(e^{-t}x+\sqrt{1-e^{-2t}}y)^q \mu(dy)(1-e^{-2t})^{r/2-k\beta/2-1}e^{-2t}dt\right)^{1/(\beta q)}\times\notag\\
&\times \left(\int_B |Q(\hat{h}_{1}(y),...,\hat{h}_{k}(y))|^{\beta q'}\mu(dy) \int^{\delta}_0 (1-e^{-2t})^{r/2-k\beta/2-1}e^{-2t}dt\right)^{1/(\beta q')}.\notag
\end{align}
According to Proposition \ref{P:CameronMartin} the coefficients of the polynomial $Q$ are bounded for fixed $k$, and since its degree does not exceed $k$, it involves only finitely many distinct products of powers of the functions $\hat{h}_i$. Together with the fact that each $\hat{h}_i$ is $N(0,1)$-distributed, this implies that there is a constant $c_1(k,q,\beta)>0$, depending on $k$ but not on the particular choice of the elements $h_1,...,h_k$, such that
\[\left(\int_B |Q(\hat{h}_{1}(y),...,\hat{h}_{k}(y))|^{\beta q'}\mu(dy)\right)^{1/(\beta q')}< c_1(k,q,\beta).\]
Taking into account (\ref{E:parameters}), we therefore obtain
\begin{multline}\label{E:I1final}
|I_1(\delta)| \leq c_1(k,q,\beta)\:\left(\frac{r}{2}-\frac{\beta k}{2}\right)^{-1/\beta}(1-e^{-2\delta})^{r/(2\beta)-k/2} \times\\
\hspace{40pt}\times\left(V_rf(x)\right)^{1/\beta'}\left(\sup_{t>0} P_t(f^{q})(x)\right)^{\frac{1}{\beta q}}.
\end{multline}
To estimate $I_2(\delta)$ let 
\begin{equation}\label{E:parameters2}
\frac{r}{k}<\gamma. 
\end{equation}
In a similar fashion we can then obtain the estimate
\begin{multline}\label{E:I2final}
|I_2(\delta)| \leq c_2(k,q,\gamma)\:\left(\frac{r}{2}-\frac{\gamma k}{2}\right)^{-1/\gamma}(1-e^{-2\delta})^{r/(2\gamma)-k/2} \times\\
\hspace{40pt}\times\left(V_rf(x)\right)^{1/\gamma'}\left(\sup_{t>0} P_t(f^{q})(x)\right)^{\frac{1}{\gamma q}},
\end{multline}
where $c_2(k,q,\gamma)>0$ is a constant depending on $n$ but not on the particular choice of $h_1,...,h_k$. 

We finally choose suitable $\delta>0$. The function 
\[\delta\mapsto (1-e^{-2\delta}),\quad \delta>0,\] 
can attain any value in $(0,1)$. Since Jensen's inequality implies
\begin{equation}
(V_rf(x))^q\leq \sup_{t>0}(P_t(f)(x))^q\leq \sup_{t>0} P_t(f^q)(x),
\end{equation}
we have $\sup_{t>0}(P_t(f^q)(x))^{1/q}\geq V_rf(x)$ and can choose $\delta>0$ such that 
\begin{equation}\label{E:desired}
(1-e^{-2\delta})= \frac{V_rf(x)^{2/r}}{2\sup_{t>0}(P_t(f^q)(x))^{2/(qr)}},
\end{equation}
note that the denominator cannot be zero unless $f$ is zero $\mu$-a.e.
Combining with (\ref{E:I1final}) and (\ref{E:I2final}) we obtain 
\begin{multline}
|D^k V_{r}f(x)(h_1,...,h_k)|\notag\\
\leq \left\lbrace c'_1(k,q,\beta)\:\left(\frac{r}{2}-\frac{\beta k}{2}\right)^{-1/\beta} + c'_2(k,q,\gamma)\:\left(\frac{r}{2}-\frac{\gamma k}{2}\right)^{-1/\gamma}\right\rbrace\times\notag\\
\times \left(V_rf(x)\right)^{1-k/r}\left(\sup_{t>0} P_t(f^{q})(x)\right)^{k/(qr)}
\end{multline}
for some constants $c'_1(k,q,\beta),\ c'_2(k,q,\gamma)$. For any given $r$ there exist only finitely many numbers $k<r$ and for any such $k$ numbers $\beta$ and $\gamma$
as in (\ref{E:parameters}) and (\ref{E:parameters2}) can be fixed. Using Proposition \ref{P:HSnormestimate} we can therefore find a constant $c(k,q,r)$ depending only on $k$, $q$ and $r$ such that (\ref{E:multest}) holds.
\end{proof}

We prove Lemma \ref{lemma1}, basically following the method of proof used for \cite[Theorem 3.3.3]{A}.

\begin{proof}
If $r=1$ then $T$ has a bounded first derivative, and the desired bound is immediate from the definition of the norm $\left\|\cdot\right\|_{W^{1,p}}$, the chain rule for the gradient $D$ and Meyer's equivalence, \cite[Theorem 4.4]{S}. In the following we therefore assume $r\geq 2$. 

We verify that for any $k\leq r$ the inequality
\begin{equation}\label{E:target}
\left\|D^k (T\circ V_r f)\right\|_{L^p(B,\mu, \mathcal{H}^k)}\leq c(k,L,p,r)\:\left\|f\right\|_{L^p(B,\mu)}
\end{equation}
holds with a constant $c(k,L,p,r)>0$ depending only on $k$, $L$, $p$ and $r$.  If so, then summing up yields
\[\left\|T\circ V_rf\right\|_{W^{r,p}(B,\mu)}=\sum_{k=0}^r \left\|D^k(T\circ V_r f)\right\|_{L^p(B,\mu, \mathcal{H}^r)}\leq c_T\:\left\|f\right\|_{L^p(B,\mu)}\]
with a constant $c_T>0$ depending on $L$, $p$ and $r$, as desired.

To see (\ref{E:target}) suppose $k\leq r$ and that $h_1,...,h_k$ are members of an orthonormal system $(g_i)_{i=1}^k$, not necessarily distinct. To simplify notation, we use multiindices with respect to this orthonormal system: Given a multiindex $\alpha=(\alpha_1,...,\alpha_k)$ we write $D^\alpha:=\partial_{g_1}^{\alpha_1}\cdots\partial_{g_k}^{\alpha_k}$,
where for $\beta=0,1,2,...$, a function $u:B\to\mathbb{R}$ and an element $g\in H$ we define $\partial_g^\beta u$ as the image of $u$ under the application $\beta$ differentiations in direction $g$, 
\[\partial^\beta_g u(x):=\partial_g\cdots \partial_g u(x)=D^\beta u(x)(g,...,g).\]
Now let $\alpha$ be a multiindex such that $D^\alpha=\partial_{h_1}\cdots \partial_{h_k}$. Then clearly $|\alpha|=k$. Moreover, we have
\begin{multline}
D^{\alpha}(T\circ V_{r}f)(x)\notag\\
=\sum^{k}_{j=1}T^{(j)}\circ{V_r f(x)}\sum C_{\alpha^1,...,\alpha^j}D^{\alpha^1}{V_{r}f(x)}\cdots D^{\alpha^j}{V_{r}f(x)}
\end{multline}
by the chain rule, where the interior sum is over all  $j$-tuples $(\alpha^1,...,\alpha^j)$ of multiindices $\alpha^i$ such that $|\alpha^i|\geq 1$ for all $i$ and $\alpha^1+\alpha^2+...+\alpha^j=\alpha$. 
The interior sum has $\binom{k-1}{j-1}$ summands. The $C_{\alpha^1,...,\alpha^j}$ are real valued coefficients, and since there are only finitely many different $C_{\alpha^1,...,\alpha^j}$, there exists a constant $C(k)>0$ which for all multiindices $\alpha$ with $|\alpha |= k$  dominates these constants, $C_{\alpha^1,...,\alpha^j}\leq C(k)$. 
In particular, $C(k)$ does not depend on the particular choice of the elements $h_1,...,h_k$. More explicit computations can for instance be obtained using \cite{HEMM03}.

The hypothesis (\ref{E:hypoonT}) on $T$ implies 
\[|D^{\alpha}(T\circ V_{r}f)(x)|\leq c(k)L\:\sum^{k}_{j=1} (V_rf(x))^{1-j}\sum |D^{\alpha^1}{V_{r}f(x)}\cdots D^{\alpha^j}{V_{r}f(x)}|\]
with a constant $c(k)>0$ depending only on $k$ and with $L$ being as in (\ref{E:hypoonT}). Since $\sum_{i=1}^j(1-|\alpha^i|/k)=j-|\alpha|/k=j-1$ and 
\[|D^{\alpha^i}{V_{r}f(x)}|\leq \|D^{|\alpha^i|}{V_{r}f(x)}\|_{\mathcal{H}_{|\alpha^i|}},\] 
Lemma \ref{lemma1} implies that 
\begin{multline}
\sum^{k}_{j=2} (V_rf(x))^{1-j}\sum |D^{\alpha^1}{V_{r}f(x)}\cdots D^{\alpha^j}{V_{r}f(x)}|\notag\\
\leq \sum^{k}_{j=2} (V_rf(x))^{1-j}\sum \|D^{|\alpha^1|}{V_{r}f(x)}\|_{\mathcal{H}_{|\alpha^1|}}\cdots \|D^{|\alpha^j|}{V_{r}f(x)}\|_{\mathcal{H}_{|\alpha^j|}} \\
\leq c(k,q,r)\sum_{j=2}^k\binom{k-1}{j-1}  (\sup_{t>0} P_t(f^q)(x))^{1/q},
\end{multline}
where $1<q<+\infty$ is arbitrary and $c(k,q,r)>0$ is a constant depending only on $k$, $q$ and $r$. For the case $j=1$ we have 
\[|D^{\alpha}{V_{r}f(x)}|\leq \|D^{k}{V_{r}f(x)}\|_{\mathcal{H}_{k}}.\] 
Taking the supremum over all $h_1,...,h_k\in H$ as above we obtain
\[\left\|D^k T\circ V_rf(x)\right\|_{\mathcal{H}^k}\leq c(k,L,q,r)\left[(\sup_{t>0} P_t(f^q)(x))^{1/q}+\|D^{k}{V_{r}f(x)}\|_{\mathcal{H}_{k}}\right]\]
with a constant $c(k,L,q,r)>0$ by Proposition \ref{P:HSnormestimate}.

Fixing $1<q<p$ and using the boundedness of the semigroup maximal function, \cite[Theorem 3.3]{S}, we see that there is a constant 
$c(p,q)>0$ depending only on $p$ and $q$ such that
\[\big\|(\sup_{t>0} P_t(f^q))^{1/q}\big\|_{L^{p}(B,\mu)}\leq c(p,q)\:\left\|f\right\|_{L^p(B,\mu)}.\]
On the other hand, by (\ref{E:equivnorm}), we have  
\[\left\|D^k V_r f\right\|_{L^p(B,\mu, \mathcal{H}_k)}\leq \frac{1}{c_1}\:\left\|f\right\|_{L^p(B,\mu)}.\]
Combining, we arrive at (\ref{E:target}).

\end{proof}

\section{$L^p$-uniqueness}\label{S:Lp}

We discuss related uniqueness problems for the Ornstein Uhlenbeck operator $\mathcal{L}$ and its integer powers. 

Recall first that a densely defined operator $(L,\mathcal{A})$ on $L^p(B,\mu)$, $1\leq p<+\infty$ is said to be \emph{$L^p$-unique} if there is only one $C_0$-semigroup on $L^p(B,\mu)$ whose generator extends $(L,\mathcal{A})$, see e.g. \cite[Chapter I b), Definition 1.3]{E}. If $(L,\mathcal{A})$ has an extension generating a $C_0$-semigroup on $L^p(B,\mu)$ then $(L,\mathcal{A})$ is $L^p$-unique if and only if the closure of $(L,\mathcal{A})$ generates a $C_0$-semigroup on $L^p(B,\mu)$, see \cite[Chapter I, Theorem 1.2 of Appendix A]{E}. 

From (\ref{E:equivnorm}) it follows that for any $m=1,2,...$  and $1<p<+\infty$ we have $\mathcal{D}((-\mathcal{L})^{m})=W^{2m,p}(B,\mu)$. The density of $\mathcal{F}C_b^\infty$ and $W^\infty$ in the spaces $W^{2m,p}(B,\mu)$ and the completeness of the latter imply that $((-\mathcal{L})^{m}, W^{2m,p}(B,\mu))$ is the closure in $L^p(B,\mu)$ of $((-\mathcal{L})^{m},\mathcal{F}C_b^\infty)$ and also of $((-\mathcal{L})^{m},W^\infty)$. 

Since obviously $(P_t)_{t>0}$ is a $C_0$-semigroup, $(\mathcal{L},\mathcal{F}C_ b^\infty)$ and $(\mathcal{L},W^\infty)$ are $L^p$-unique in all $L^p(B,\mu)$, $1\leq p<+\infty$. To discuss the its powers $-(-\mathcal{L})^m$ for $m\geq 2$ we quote well known facts to provide a sufficient condition for them to generate $C_0$-semigroups. Since $(P_t)_{t>0}$ is a symmetric Markov semigroup on $L^2(B,\mu)$, for any $1<p<+\infty$ the operator $\mathcal{L}=\mathcal{L}^{(p)}$ generates a bounded holomorphic semigroups on $L^p(B,\mu)$ with angle $\theta$ satisfying $\frac{\pi}{2}-\theta\leq \frac{\pi}{2}|\frac{2}{p}-1|$,
see for instance \cite[Theorem 1.4.2]{D89}. On the other hand \cite[Theorem 4]{DeL87} tells that if $L$ is the generator of a bounded holomorphic semigroup with angle $\theta$ satisfying $\frac{\pi}{2}-\theta<\frac{\pi}{2m}$, then also $-(-L)^m$ generates a bounded holomorphic semigroup. Combining, we can conclude that $-(-\mathcal{L})^m$ generates a bounded holomorphic semigroup on $L^p(B,\mu)$ and therefore in particular a (bounded) $C_0$-semigroup if
\begin{equation}\label{E:condition}
|\frac{2}{p}-1|<\frac{1}{m}\ .
\end{equation}
\cite[Theorem 8]{DeL88} shows that (up to a discussion of limit cases) this is a sharp condition for $-(-\mathcal{L})^m$ to generate a bounded $C_0$-semigroup. For $1<p<+\infty$ this also recovers the $L^p$-uniqueness in the case $m=1$. For $p=2$ condition (\ref{E:condition}) is always satisfied. Alternatively we can conclude the generation of $C_0$-semigroups on $L^2(B,\mu)$ directly from the spectral theorem. 

For later use we fix the following fact.

\begin{prop}
Let $1<p<+\infty$ and let $m>0$ be an integer satisfying (\ref{E:condition}). Then the operators $(-(-\mathcal{L})^m,\mathcal{F}C_b^\infty)$ and $(-(-\mathcal{L})^m, W^\infty)$ are $L^p$-unique in $L^p(B,\mu)$. In particular, they are essentially self-adjoint in $L^2(B,\mu)$ for all $m>0$.
\end{prop}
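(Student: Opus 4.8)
The plan is to reduce the $L^p$-uniqueness of $(-(-\mathcal{L})^m,\mathcal{F}C_b^\infty)$ and $(-(-\mathcal{L})^m,W^\infty)$ to the generation criterion already assembled in the preceding paragraphs together with the abstract uniqueness criterion from \cite[Chapter I, Theorem 1.2 of Appendix A]{E}. First I would invoke the discussion around \eqref{E:condition}: since $1<p<+\infty$ and $m$ satisfies \eqref{E:condition}, the combination of \cite[Theorem 1.4.2]{D89} and \cite[Theorem 4]{DeL87} shows that $-(-\mathcal{L})^m$ generates a bounded holomorphic, hence a $C_0$-, semigroup on $L^p(B,\mu)$. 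In particular $(-(-\mathcal{L})^m, W^{2m,p}(B,\mu))$ is the generator of a $C_0$-semigroup, so it is a closed extension of both $(-(-\mathcal{L})^m,\mathcal{F}C_b^\infty)$ and $(-(-\mathcal{L})^m,W^\infty)$ that generates a $C_0$-semigroup.

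Next I would verify that this full-domain operator is precisely the closure of the operators defined on the two algebras. By Meyer's equivalence \eqref{E:equivnorm} we have $\mathcal{D}((-\mathcal{L})^m)=W^{2m,p}(B,\mu)$, and as noted in the text the density of $\mathcal{F}C_b^\infty$ and of $W^\infty$ in $W^{2m,p}(B,\mu)$, together with the completeness of $W^{2m,p}(B,\mu)$, implies that $((-\mathcal{L})^m,W^{2m,p}(B,\mu))$ is the closure in $L^p(B,\mu)$ of $((-\mathcal{L})^m,\mathcal{F}C_b^\infty)$ and also of $((-\mathcal{L})^m,W^\infty)$; the same holds after multiplication by $-1$. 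Concretely, for $u\in W^{2m,p}(B,\mu)$ a sequence $u_n$ in $\mathcal{F}C_b^\infty$ (resp. $W^\infty$) with $u_n\to u$ in $W^{2m,p}(B,\mu)$ gives $(-(-\mathcal{L})^m)u_n\to(-(-\mathcal{L})^m)u$ in $L^p(B,\mu)$ by the right inequality in \eqref{E:equivnorm}, so the graph closure already reaches the full domain.

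With these two observations in place the conclusion is immediate: each of $(-(-\mathcal{L})^m,\mathcal{F}C_b^\infty)$ and $(-(-\mathcal{L})^m,W^\infty)$ admits an extension generating a $C_0$-semigroup on $L^p(B,\mu)$, namely $(-(-\mathcal{L})^m,W^{2m,p}(B,\mu))$, and this extension coincides with the respective closure. By \cite[Chapter I, Theorem 1.2 of Appendix A]{E}, an operator possessing a $C_0$-semigroup-generating extension is $L^p$-unique if and only if its closure generates a $C_0$-semigroup; since here the closure \emph{is} the generator, $L^p$-uniqueness follows. For the final sentence I would specialize to $p=2$: condition \eqref{E:condition} reads $|\frac{2}{2}-1|=0<\frac1m$, which holds for every integer $m>0$, so the argument applies unconditionally; moreover $(P_t^{(2)})_{t>0}$ is a symmetric semigroup, hence $\mathcal{L}$ is self-adjoint on $L^2(B,\mu)$ and so is $-(-\mathcal{L})^m$ by the spectral theorem, and $L^2$-uniqueness of a symmetric operator is exactly essential self-adjointness.

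I expect the only genuinely delicate point to be the identification of the closure with the full-domain operator on both algebras simultaneously, i.e.\ confirming that $\mathcal{F}C_b^\infty$ and $W^\infty$ are cores for $(-\mathcal{L})^m$ in the graph norm; but this is handled entirely by Meyer's equivalence \eqref{E:equivnorm}, which turns graph-norm convergence into $W^{2m,p}$-convergence, combined with the stated density of both algebras in $W^{2m,p}(B,\mu)$. Everything else is a direct citation of the generation results \cite{D89,DeL87} and the abstract uniqueness criterion \cite{E}, so no new estimates are required.
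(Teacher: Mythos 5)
Your proposal is correct and takes essentially the same approach as the paper, whose own proof is precisely the discussion preceding the proposition: generation of a bounded holomorphic (hence $C_0$-) semigroup by $-(-\mathcal{L})^m$ under (\ref{E:condition}) via \cite{D89} and \cite{DeL87}, identification of the closure on either algebra with $(-(-\mathcal{L})^m,W^{2m,p}(B,\mu))$ through Meyer's equivalence and density, and the abstract uniqueness criterion of \cite{E}. The only minor difference is at $p=2$: the paper cites \cite[Chapter I c), Corollary 1.2]{E}, which is stated for \emph{semi-bounded} symmetric operators (a hypothesis you should retain when quoting that equivalence, though it holds here since $-(-\mathcal{L})^m\leq 0$), whereas your argument can equally conclude directly from the core identification together with self-adjointness of the full-domain operator via the spectral theorem.
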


The last statement is true because a semi-bounded symmetric operator $(L,A)$ on $L^2(B,\mu)$ is $L^2$-unique if and only if it is essential self-adjoint, see \cite[Chapter I c), Corollary 1.2]{E}.

Here we are interested in $L^p$-uniqueness after the removal of a small closed set $\Sigma\subset B$ of zero measure. This is similar to our discussion in \cite{HiKaMa} and, in a sense, similar to a removable singularities problem, see for instance \cite{Mazja72} or \cite{Mazja} or \cite[Section 2.7]{A}.

Let $\Sigma\subset B$ be a closed set of zero Gaussian measure and $N:=B\setminus \Sigma$.
We define 
\[\mathcal{F}C^{\infty}_b(N):=\{f\in \mathcal{F}C^{\infty}_b|\: \text{$f=0$ on an open neighborhood of $\Sigma$} \}\]
and 
\[W^\infty(N):=\{f\in W^\infty|\: \text{$\widetilde{f}=0$ q.s. on an open neighborhood of $\Sigma$} \}.\]
The $L^p$-uniqueness of $-(-\mathcal{L})^m$, restricted to $\mathcal{F}C^{\infty}_b(N)$ and $W^\infty(N)$, respectively, now depends on the size of the set $\Sigma$. If it is small enough not to cause additional boundary effects then from the point of view of operator extensions it is removable.

\begin{theo}\label{T:Lp}
Let $1<p<+\infty$, let $m>0$ be an integer and assume that $\Sigma\subset B$ is a closed set of zero measure $\mu$. Write $N:=B\setminus \Sigma$.
\begin{enumerate}
\item[(i)] If $\cpct_{2m,p}(\Sigma)=0$ then the closure of $(-(-\mathcal{L})^m,\mathcal{F}C_b^\infty(N))$ in $L^p(B,\mu)$ is \[(-(-\mathcal{L})^m,W^{2m,p}(B,\mu)).\] 
If in addition $m$ satisfies (\ref{E:condition}) then $(-(-\mathcal{L})^m,\mathcal{F}C_b^\infty(N))$ is $L^p$-unique.
\item[(ii)] If $(-(-\mathcal{L})^m,\mathcal{F}C_b^\infty(N))$ is $L^p$-unique, then $\cpct_{2m,p}(\Sigma)=0$.
\end{enumerate}
The same statements are true with $W^\infty(N)$ in place of $\mathcal{F}C_b^\infty(N)$.
\end{theo}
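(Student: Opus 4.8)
The plan is to recast the statement as a density problem in $W^{2m,p}(B,\mu)$ and then to read off $L^p$-uniqueness from the criterion recalled from \cite{E} at the start of this section. First I would use Meyer's equivalence (\ref{E:equivnorm}): on $\mathcal{D}((-\mathcal{L})^m)=W^{2m,p}(B,\mu)$ the graph norm of $(-(-\mathcal{L})^m,\cdot)$ is equivalent to $\|\cdot\|_{W^{2m,p}(B,\mu)}$, so the $L^p$-closure of $(-(-\mathcal{L})^m,\mathcal{A})$ is $(-(-\mathcal{L})^m,\overline{\mathcal{A}})$, where $\overline{\mathcal{A}}$ denotes the $W^{2m,p}(B,\mu)$-closure of $\mathcal{A}\in\{\mathcal{F}C_b^\infty(N),W^\infty(N)\}$. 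Since every $f\in\mathcal{F}C_b^\infty(N)$ vanishes on a genuine neighborhood of $\Sigma$, its quasi-sure redefinition vanishes q.s. there, whence $\mathcal{F}C_b^\infty(N)\subset W^\infty(N)$ and $\overline{\mathcal{F}C_b^\infty(N)}\subset\overline{W^\infty(N)}\subset W^{2m,p}(B,\mu)$. It therefore suffices to establish, for each algebra, the equivalence $\overline{\mathcal{A}}=W^{2m,p}(B,\mu)\Longleftrightarrow\cpct_{2m,p}(\Sigma)=0$; by the inclusion just noted, the nontrivial "full closure" direction for $W^\infty(N)$ follows from the one for $\mathcal{F}C_b^\infty(N)$ by sandwiching.

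For the implication "$\Leftarrow$" I would exploit Theorem \ref{T:equiv}. From $\cpct_{2m,p}(\Sigma)=0$ and (\ref{E:FCbinfty}) we get $\ccpct_{2m,p}^{(\mathcal{F}C_b^\infty)}(\Sigma)=0$, hence cutoffs $u_n\in\mathcal{F}C_b^\infty$ with $u_n=1$ on open neighborhoods of $\Sigma$ and $\|u_n\|_{W^{2m,p}(B,\mu)}\to0$. For fixed $f\in\mathcal{F}C_b^\infty$ the functions $f(1-u_n)$ lie in $\mathcal{F}C_b^\infty(N)$, and I claim $f(1-u_n)\to f$ in $W^{2m,p}(B,\mu)$, i.e. $fu_n\to0$. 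This is where the cylindrical nature of the multiplier is used: by the Leibniz rule $\|D^k(fu_n)\|_{\mathcal{H}_k}\le\sum_{j=0}^k\binom kj\|D^jf\|_{\mathcal{H}_j}\|D^{k-j}u_n\|_{\mathcal{H}_{k-j}}$ pointwise, and because $f$ is cylindrical each $\|D^jf\|_{L^\infty(B,\mu,\mathcal{H}_j)}$ is finite; taking $L^p$-norms and pairing the bounded factor $D^jf$ with $\|D^{k-j}u_n\|_{L^p}\to0$ yields $\|fu_n\|_{W^{2m,p}(B,\mu)}\to0$. As $\mathcal{F}C_b^\infty$ is dense in $W^{2m,p}(B,\mu)$, this gives $\overline{\mathcal{F}C_b^\infty(N)}=W^{2m,p}(B,\mu)$, and by sandwiching also $\overline{W^\infty(N)}=W^{2m,p}(B,\mu)$.

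For the implication "$\Rightarrow$" I would test on the constant $1\in\mathcal{F}C_b^\infty$: if $\overline{\mathcal{F}C_b^\infty(N)}=W^{2m,p}(B,\mu)$ there are $f_n\in\mathcal{F}C_b^\infty(N)$ with $f_n\to1$, so $1-f_n$ equals $1$ on a neighborhood of $\Sigma$ with $\|1-f_n\|_{W^{2m,p}(B,\mu)}\to0$, forcing $\ccpct_{2m,p}^{(\mathcal{F}C_b^\infty)}(\Sigma)=0$ and, by (\ref{E:FCbinfty}), $\cpct_{2m,p}(\Sigma)=0$; the same argument using (\ref{E:Winfty}) and the q.s. redefinition of $1-f_n$ treats the $W^\infty(N)$-closure directly. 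It remains to pass from the closure identity to $L^p$-uniqueness. Under (\ref{E:condition}) the full operator $(-(-\mathcal{L})^m,W^{2m,p}(B,\mu))$ generates a $C_0$-semigroup, so for part (i) the closure of the restricted operator equals this generator and the criterion from \cite{E} gives $L^p$-uniqueness. For part (ii) I would contrapose: if $\cpct_{2m,p}(\Sigma)>0$ then $\overline{\mathcal{A}}\subsetneq W^{2m,p}(B,\mu)$, so the closure is a proper restriction of the generator $(-(-\mathcal{L})^m,W^{2m,p}(B,\mu))$; since for large $\lambda$ the equation $(\lambda+(-\mathcal{L})^m)u=g$ would be uniquely solvable in both domains, a proper restriction of a generator can never itself be a generator, so the closure does not generate and the criterion shows the operator fails to be $L^p$-unique.

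The main obstacle is the faithful translation between the potential-theoretic and the operator-theoretic pictures rather than any single estimate. One must keep the smooth cutoffs so that $f(1-u_n)$ stays in the vanishing class while still converging in the full $W^{2m,p}(B,\mu)$-norm; the product estimate above is the technical heart, and it is exactly what forces the multiplier to be cylindrical and motivates reducing the $W^\infty(N)$-case to the cylindrical one via $\mathcal{F}C_b^\infty(N)\subset W^\infty(N)$, rather than trying to multiply by a general Watanabe test function. The delicate conceptual point is the generation of the $m$-th power operator: the criterion of \cite{E} and the "no proper sub-generator" argument only bite once a $C_0$-generating extension is available, which for $m\ge2$ and $p\ne2$ is precisely the role of hypothesis (\ref{E:condition}) (automatic when $p=2$), so part (ii) must be read within the regime where such an extension exists.
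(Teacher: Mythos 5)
Your part (i) is correct and is essentially the paper's own proof: Meyer's equivalence (\ref{E:equivnorm}) identifies the $L^p$-graph-norm closure with the $W^{2m,p}(B,\mu)$-closure; Theorem \ref{T:equiv} supplies cutoffs $u_n\in\mathcal{F}C_b^\infty$ equal to $1$ near $\Sigma$ with $\|u_n\|_{W^{2m,p}(B,\mu)}\to 0$; and the Leibniz estimate works exactly because every derivative of a cylindrical multiplier is uniformly bounded (the paper's version of this estimate carries a constant $c(k)$ via Proposition \ref{P:HSnormestimate}, yours is the cleaner tensor-norm form, and both are fine). Your reduction of the $W^\infty(N)$ case to the cylindrical one via $\mathcal{F}C_b^\infty(N)\subset W^\infty(N)$ is a legitimate, and more explicit, route than the paper's ``the proof for $W^\infty$ is similar'', and your capacity-testing argument with the constant function $1$ coincides with the paper's argument for (ii).

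The gap is in the operator-theoretic half of part (ii). Your contrapositive rests on $M:=(-(-\mathcal{L})^m,W^{2m,p}(B,\mu))$ being a generator: you invoke unique solvability of $(\lambda+(-\mathcal{L})^m)u=g$ \emph{in both domains}, i.e.\ bijectivity of $\lambda-M$, and that is precisely what hypothesis (\ref{E:condition}) buys and what is unavailable for $m\geq 2$ and general $p$. But statement (ii) carries no such hypothesis, and your closing suggestion that it ``must be read within the regime where such an extension exists'' both misreads the definition and needlessly weakens the theorem. By the definition of $L^p$-uniqueness quoted from \cite{E}, the hypothesis of (ii) already guarantees that \emph{some} $C_0$-generating extension exists (not necessarily $M$), so Eberle's criterion applies and yields that the closure $\bar L$ of the restricted operator generates; what you then need from $M$ is only \emph{injectivity} of $\lambda-M$ for one $\lambda>0$, never surjectivity. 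Injectivity is unconditional: if $u\in W^{2m,p}(B,\mu)$ satisfies $(-\mathcal{L})^m u=-\lambda u$, then applying the Wiener chaos projections (bounded on $L^p(B,\mu)$ and commuting with $\mathcal{L}$), or alternatively $P_t$ for large $t$ together with hypercontractivity and the $L^2$ spectral theorem, forces $u=0$. With this, if $\bar L\subsetneq M$, pick $u\in\mathcal{D}(M)\setminus\mathcal{D}(\bar L)$, use surjectivity of $\lambda-\bar L$ to solve $(\lambda-\bar L)v=(\lambda-M)u$, and obtain the nonzero element $u-v$ of the kernel of $\lambda-M$, a contradiction. Hence $\bar L=M$ --- this is the content of the paper's assertion in (ii) that the unique extension must be $(-(-\mathcal{L})^m,W^{2m,p}(B,\mu))$ --- and your capacity argument then closes the proof of (ii) in full generality, with no appeal to (\ref{E:condition}).
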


\begin{proof}
To see (i) suppose that $\cpct_{2m,p}(\Sigma)=0$. Let $((-\mathcal{L})^m,\mathcal{D}((-\mathcal{L})^m))$ denote the closure of $((-\mathcal{L})^m, \mathcal{F}C_b^\infty(N))$ in $L^p(B,\mu)$. Since $\mathcal{F}C_b^\infty(N)\subset \mathcal{F}C_b^\infty$ we trivially have 
\[\mathcal{D}((-\mathcal{L})^m)\subset W^{2m,p}(B,\mu),\] 
and it remains to show the converse inclusion. 

Given $u\in W^{2m,p}(B,\mu)$, let $(u_j)_{j=1}^\infty\subset \mathcal{F}C^{\infty}_b$ be a sequence approximating $u$ in $W^{2m,p}(B,\mu)$. By Theorem \ref{T:equiv} there is a sequence $(v_l)_{l=1}^\infty\subset \mathcal{F}C^{\infty}_b$ such that $\lim_{l\to\infty} v_l=0$ in $W^{2m,p}(B,\mu)$ and for each $l$ the function $v_l$ equals one on an open neighborhood of $\Sigma$. Set $w_{jl}:=(1-v_l)u_j$ to obtain functions $w_{jl}\in \mathcal{F}C^{\infty}_b(N).$ Now let $j$ be fixed. For any $1\leq k\leq 2m$ let $h_1,...,h_k$ be members of an orthonormal system $(g_i)_{i=1}^k$, not necessarily distinct. As in the proof of Lemma \ref{lemma1} we use multiindex notation with respect to this orthonormal system. Let $\alpha$ be such that $D^\alpha=\partial_{h_1}\cdots\partial_{h_k}$. Then, by the general Leibniz rule,
\[D^\alpha (u_j-w_{jl})(x)=D^\alpha (u_jv_l)(x)=\sum_{\beta\leq \alpha}\binom{\alpha}{\beta}D^\beta u_j(x)D^{\alpha-\beta}v_l(x),\]
where for two multiindices $\alpha$ and $\beta$ we write $\beta\leq \alpha$ if $\beta_i\leq \alpha_i$ for all $i=1,...,k$. For any such $\beta$ we clearly have 
\[|D^\beta u_j(x)|\leq \left\|D^{|\beta|}u_j(x)\right\|_{\mathcal{H}_{|\beta|}}\ \text{and }\ |D^{\alpha-\beta}v_l(x)|\leq \left\|D^{|\alpha-\beta|}v_l(x)\right\|_{\mathcal{H}_{|\alpha-\beta|}},\]
and taking the supremum over all $h_1,...,h_k$ as above,
\[\left\|D^k(u_j-w_{jl})(x)\right\|_{\mathcal{H}_k}\leq c(k)\max_{n\leq k}\left\|D^n u_j(x)\right\|_{\mathcal{H}_n}\max_{n\leq k}\left\|D^n v_l(x)\right\|_{\mathcal{H}_n}\]
with a constant $c(k)>0$ depending only on $k$. Taking into account that 
\[\sup_{x\in B} \left\|D^n u_j(x)\right\|_{\mathcal{H}_n}<+\infty \]
for any $n\geq 1$ and summing up, we see that 
\begin{align}
\lim_l \sum_{k=1}^{2m}& \left\|D^k(u_j-w_l)\right\|_{L^p(B,\mu,\mathcal{H}_k)}\notag\\
&\leq c(m)\max_{n\leq 2m}\sup_{x\in B} \left\|D^n u_j(x)\right\|_{\mathcal{H}_n}\lim_l \left\|v_l\right\|_{W^{2m,p}}\notag\\
&=0,\notag
\end{align}
here $c(m)>0$ is a constant depending on $m$ only. Since $u_j$ is bounded, we also have $\lim_l (u_j-w_{jl})=\lim_l u_jv_l= 0$ in $L^p(B,\mu)$ so that 
\[\lim_l w_{jl}=u_j\quad \text{in $W^{2m,p}(B,\mu)$,}\] 
what implies $u\in \mathcal{D}((-\mathcal{L})^m)$ and therefore 
\[W^{2m,p}(B,\mu)\subset \mathcal{D}((-\mathcal{L})^m).\] 

To see (ii) suppose that $(-(-\mathcal{L})^m,\mathcal{F}C^{\infty}_b(N))$ is $L^p$-unique in $L^p(B,\mu)$. Then its unique extension must be $(-(-\mathcal{L})^m, W^{2m,p}(B,\mu))$. Let $u\in \mathcal{F}C_b^{\infty}$ be a function that equals one on a neighborhood of $\Sigma$. Since $\mathcal{F}C_b^{\infty}\subset W^{2m,p}(B,\mu)$ and by hypothesis $\mathcal{F}C_b^{\infty}(N)$ is dense in $W^{2m,p}(B,\mu)$, we can find a sequence $(u_l)_l\subset \mathcal{F}C_b^{\infty}(N)$ approximating $u$ in $W^{2m,p}(B,\mu)$. The functions $e_l:=u-u_l$ then are in $\mathcal{F}C_b^{\infty}$, each equals one on an open neighborhood of $\Sigma$, and they converge to zero in $W^{2m,p}(B,\mu)$, so that by Theorem \ref{T:equiv} we have
\[\cpct_{2m,p}(\Sigma)\leq c_2 \lim_l\left\|e_l\right\|_{W^{2m,p}}=0.\]

The proof for $W^\infty$ is similar.
\end{proof}

\section{Comments on Gaussian Hausdorff measures}\label{S:Hausdorff}

For finite dimensional Euclidean spaces the link between Sobolev type capacities and Hausdorff measures is well known and the critical size of a set $\Sigma$ in order to have $(r,p)$-capacity zero or not is, roughly speaking,  determined by its Hausdorff codimension, see e.g. \cite[Chapter 5]{A}. For Wiener spaces one can at least provide a partial result of this type.

Hausdorff measures on Wiener spaces of integer codimension had been introduced in \cite[Section 1]{FdP92}. We briefly sketch their method but allow non-integer codimensions, this is an effortless generalization and immediate from their arguments.

Given an $m$-dimensional Euclidean space $F$ and a real number $0\leq d\leq m$ the spherical Hausdorff measure $\mathcal{S}^d$ of dimension $d$ can be defined as follows: For any $\varepsilon >0$ set 
\begin{multline}
\mathcal{S}^d_\varepsilon(A):=\inf\left\lbrace \sum_{i=1}^\infty r_i^d: \left\lbrace B_i\right\rbrace_{i=1}^\infty\ \text{is a collection of balls }\right.\notag\\
\left. \quad \text{of radius $r_i<\varepsilon/2$ such that $A\subset \bigcup_{i=1}^\infty B_i$}\right\rbrace,
\end{multline}
and finally, $\mathcal{S}^d(A):=\sup_{\varepsilon >0} \mathcal{S}^d_\varepsilon(A)$, $A\subset F$. A priori $\mathcal{S}^d$ is an outer measure, but its $\sigma$-algebra of measurable sets contains all Borel sets. For any $0\leq d\leq m$ and we define 
\[\theta_d^F(A):=(2\pi)^{-m/2}\int_A \exp\left(-\frac{|y|_F^2}{2}\right)\mathcal{S}^{m-d}(dy),\]
for Borel sets $A\subset F$, \cite[1. Definition]{FdP92}, by approximation from outside it extends to an outer measure on $F$, defined in particular for any analytic set. Recall that a set $A\subset F$ is called \emph{analytic} if it is a continuous image of a Polish space.

We return to the abstract Wiener space $(B,\mu,H)$. Let $d\geq 0$ be a real number and let $F$ be a subspace of $H$ of finite dimension $m\geq d$. Let $p^F$ denote the orthogonal projection from $H$ onto $F$, it extends to a linear projection $p^F$ from $B$ onto $F$ which is $(r,p)$-quasi continuous for all $r$ and $p$, \cite[11. Th\'eor\`{e}me]{FdP91}. We write $\widetilde{F}$ for the kernel of $p^F$. The spaces $B$ and $F\times \widetilde{F}$ are isomorphic under 
the map $p^F\times (I-p^F)$. If $A\subset B$ is analytical and for any $x\in\widetilde{F}$ the section with respect to the above product is denotes by $A_x\subset F$, then for any $a\in\mathbb{R}$ the set $\{ x\in\widetilde{F}: \theta_d^F(A_x)>a\}$  is analytic up to a slim set, as shown in \cite[4. Lemma]{FdP92}. We follow \cite[5. Definition]{FdP92} and set $\mu^F(B):=\mu((I-p^F)^{-1}(B))$ for any analytic subset $B$ of $F$. Then by \cite[4. Lemma]{FdP92} we can define 
\[\varrho_d^F(A):=\int_B \theta^F_d(A_x)\mu(dx)\]
for any analytic subset $A$ of $B$. As in \cite[8. Definition]{FdP92} we define the \emph{Gaussian Hausdorff measure $\varrho_d$ of codimension $d\geq 0$} by 
\[\varrho_d(A):=\sup\left\lbrace \varrho_d^F(A): F\subset H\ \text{ and $d\leq \dim F<+\infty$}\right\rbrace\]
for any analytic set $A\subset B$. Restricted to the Borel $\sigma$-algebra it is a Borel measure. The next result follows in the same way as \cite[9. Theorem]{FdP92} from \cite[32. Th\'eor\`{e}me]{FdP91} and \cite{MazjaKhavin}, see also \cite[Theorem 5.1.13]{A}.

\begin{theo}
If a Borel set $A\subset B$ satisfies $\cpct_{r,p}(A)=0$, then $\varrho_d(A)=0$ for all $d<rp$.
\end{theo}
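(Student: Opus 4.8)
The plan is to reduce the infinite-dimensional statement to the finite-dimensional theorem of Maz'ja and Khavin by slicing along finite-dimensional subspaces, exactly as in the proof of \cite[9. Theorem]{FdP92}. By the very definition of $\varrho_d$ as a supremum, it suffices to prove $\varrho_d^F(A)=0$ for every finite-dimensional subspace $F\subset H$ with $m:=\dim F\geq d$. Fixing such an $F$ and using the isomorphism $B\cong F\times\widetilde F$ induced by $p^F\times(I-p^F)$, we have $\varrho_d^F(A)=\int_B\theta_d^F(A_x)\,\mu(dx)$, so the goal becomes to show that the section $A_x\subset F$ satisfies $\theta_d^F(A_x)=0$ for $\mu$-a.e.\ $x$. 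Since the Gaussian weight $(2\pi)^{-m/2}\exp(-|y|_F^2/2)$ is bounded above and below on every ball of $F$, the vanishing of $\theta_d^F(A_x)$ is equivalent to $\mathcal{S}^{m-d}(A_x)=0$, i.e.\ to the section having zero spherical Hausdorff measure of dimension $m-d$.

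First I would record the finite-dimensional input. On $F\cong\mathbb{R}^m$ the Maz'ja--Khavin theorem, \cite{MazjaKhavin}, \cite[Theorem 5.1.13]{A}, states that a set of zero $(r,p)$-capacity has zero Hausdorff measure of every dimension $s>m-rp$; equivalently, in codimension language, zero $(r,p)$-capacity forces $\mathcal{S}^{m-d}=0$ for all $d<rp$. Because the Gaussian and Euclidean Bessel capacities on $F$ are comparable on bounded sets, the same conclusion holds for the finite-dimensional Ornstein--Uhlenbeck capacity, which I denote $c_{r,p}^F$. Thus the problem is reduced to proving that $\cpct_{r,p}(A)=0$ implies $c_{r,p}^F(A_x)=0$ for $\mu$-a.e.\ $x\in\widetilde F$.

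This last implication is the crux, and it is exactly the content supplied by the projective structure of Gaussian capacities in \cite[32. Th\'eor\`{e}me]{FdP91}: quasi-continuity and capacity on $B$ disintegrate along the splitting $B\cong F\times\widetilde F$ into finite-dimensional quasi-continuity and capacity of the sections, for $\mu$-almost every value of the transverse variable. Concretely, if $\cpct_{r,p}(A)=0$ I would cover $A$ by open sets of small $(r,p)$-capacity, transport the associated potentials through the $(r,p)$-quasi-continuous projection $p^F$, and invoke \cite[32. Th\'eor\`{e}me]{FdP91} to deduce that $c_{r,p}^F(A_x)=0$ off a slim set of $x$, which is $\mu$-null. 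Here the measurability subtleties flagged after the definition of $\varrho_d$ --- that $\{x:\theta_d^F(A_x)>a\}$ is analytic up to a slim set, by \cite[4. Lemma]{FdP92} --- guarantee that the integral defining $\varrho_d^F$ is meaningful and that the $\mu$-a.e.\ vanishing of the integrand yields $\varrho_d^F(A)=0$. Taking the supremum over $F$ then gives $\varrho_d(A)=0$.

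The hard part will be the slicing step: making precise how zero capacity on the Wiener space passes to zero finite-dimensional capacity of almost every section. This requires the full force of \cite[32. Th\'eor\`{e}me]{FdP91} together with careful bookkeeping of the exceptional (slim) sets, and it is precisely the point at which the argument must parallel the proof of \cite[9. Theorem]{FdP92}. Once the slicing and the local comparison of Gaussian with Euclidean capacity are in hand, the Maz'ja--Khavin estimate and the definition of $\varrho_d$ close the argument with no further difficulty.
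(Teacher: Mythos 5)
Your proposal takes essentially the same route as the paper, which deduces the theorem exactly as \cite[9. Theorem]{FdP92} is deduced from the slicing result \cite[32. Th\'eor\`{e}me]{FdP91} together with the finite-dimensional Maz'ya--Khavin comparison \cite{MazjaKhavin}, \cite[Theorem 5.1.13]{A}; your reduction to sections $A_x$ over finite-dimensional $F\subset H$, the transfer of capacity nullity to almost every section, and the codimension bookkeeping are precisely that argument spelled out. The proposal is correct and matches the paper's (citation-based) proof.
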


Combined with Theorem \ref{T:Lp} this yields a necessary codimension condition which is similar as in the case of Laplacians on Euclidean spaces, \cite{AGMST13, HiKaMa}.

\begin{cor}\label{C:codimension}
Assume $1<p<+\infty$. Let $\Sigma\subset B$ be a closed set of zero measure and $N:=B\setminus \Sigma$. 

If $(-(-\mathcal{L})^m, \mathcal{F}C_b^\infty(N))$ is $L^p$-unique, then 
\[\varrho_d(\Sigma)=0\quad \text{for all $d<2mp$.}\] 
In particular, if $(\mathcal{L}, \mathcal{F}C_b^\infty(N))$ is essentially self-adjoint, then 
\[\varrho_d(\Sigma)=0\quad \text{ for all $d<4$.}\]

The same is true with $W^\infty(N)$ in place of $\mathcal{F}C_b^\infty(N)$.
\end{cor}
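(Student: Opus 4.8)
The plan is simply to concatenate the two preceding results: the capacitary characterization of $L^p$-uniqueness from Theorem \ref{T:Lp} and the comparison between vanishing capacity and vanishing Gaussian Hausdorff measure stated immediately above. Everything of substance is already contained in those two statements, so the proof is a matter of choosing the right parameters and verifying the (mild) measurability hypotheses.

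First I would invoke Theorem \ref{T:Lp}(ii) with $r=2m$. By hypothesis $\Sigma\subset B$ is closed with $\mu(\Sigma)=0$ and $(-(-\mathcal{L})^m,\mathcal{F}C_b^\infty(N))$ is $L^p$-unique, so part (ii) yields directly
\[\cpct_{2m,p}(\Sigma)=0.\]
Next, since a closed set is in particular Borel, I would feed this into the Theorem stated just before the corollary, again with $r=2m$: from $\cpct_{2m,p}(\Sigma)=0$ it follows that $\varrho_d(\Sigma)=0$ for every $d<2m\cdot p=2mp$, which is the first assertion. (The only thing to check here is that $\Sigma$ satisfies the measurability requirement under which $\varrho_d$ is defined, and this is immediate from closedness.)

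For the essential self-adjointness statement I would specialize to $m=1$ and $p=2$. Here $-(-\mathcal{L})^1=\mathcal{L}$ is a symmetric, nonpositive and hence semibounded operator on $L^2(B,\mu)$, so by the equivalence recorded after the preceding Proposition, namely \cite[Chapter I c), Corollary 1.2]{E}, its essential self-adjointness is the same as the $L^2$-uniqueness of $(\mathcal{L},\mathcal{F}C_b^\infty(N))$. Applying the first part of the corollary with $m=1$ and $p=2$ then gives $\varrho_d(\Sigma)=0$ for all $d<2\cdot 1\cdot 2=4$, as claimed. Finally, the version with $W^\infty(N)$ in place of $\mathcal{F}C_b^\infty(N)$ is obtained verbatim, since Theorem \ref{T:Lp} is asserted to hold equally for $W^\infty(N)$.

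I do not expect any genuine obstacle in this corollary: the two inputs do all the work, and what remains is bookkeeping. The two points deserving a line of justification are the Borel (hence analytic) measurability of $\Sigma$ needed to apply the Hausdorff comparison, and the translation of essential self-adjointness into $L^2$-uniqueness for the semibounded symmetric operator $\mathcal{L}$; both are routine.
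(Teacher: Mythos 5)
Your proposal is correct and is exactly the argument the paper intends: Theorem \ref{T:Lp}(ii) converts $L^p$-uniqueness into $\cpct_{2m,p}(\Sigma)=0$, the preceding theorem on Gaussian Hausdorff measures (applicable since $\Sigma$ is closed, hence Borel) converts this into $\varrho_d(\Sigma)=0$ for $d<2mp$, and the essential self-adjointness statement is the case $m=1$, $p=2$ via the equivalence with $L^2$-uniqueness for semibounded symmetric operators cited from \cite[Chapter I c), Corollary 1.2]{E}. The $W^\infty(N)$ variant follows verbatim, just as you say.
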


\section[Comments on stochastic processes]{Comments on stochastic processes}\label{S:processes}
 
We finally like to briefly point out connections to known Kakutani type theorems for related multiparameter Ornstein-Uhlenbeck processes. The connection between Gaussian capacities, \cite{FdP91}, and the hitting behavious of multiparameter processes, \cite{Hi93, HiSo95, HiSo95Pota}, has for instance been investigated in \cite{Ba94, So93, So93b}. We briefly sketch the construction and main result of \cite{So93b}, later generalized in \cite{Ba94}. 

Let $\Theta^{(0)}:=B$ and for integer $k\geq 1$, $\Theta^{(k+1)}(B):=C(\mathbb{R}_+,\Theta^{(k)}(B))$. The space $\Theta^k(B)$ can be identified with $C(\mathbb{R}_+^k,B)$. Moreover, set $\mu^{(0)}:=\mu$, $T_t^{(0)}:=P_t$, $t>0$, and let $Z^{(1)}$ be the Ornstein-Uhlenbeck process taking values in $\Theta^{(0)}(B)=B$ with semigroup $T_t^{(0)}$ and initial law $\mu^{(0)}$. Let $\mu^{(1)}$ denote the law of the process $Z^{(1)}$, clearly a centered Gaussian measure on $\Theta^{(1)}(B)$. Next, let $(T_t^{(1)})_{t<0}$ be the Ornstein-Uhlenbeck semigroup on $\Theta^{(1)}(B)$ defined by
\[T_t^{(1)}f(x)=\int_{\Theta^{(1)}(B)} f(e^{-t}x+\sqrt{1-e^{-2t}}y)\mu^{(1)}(dy),\quad x\in \Theta^{(1)}(B),\]
for any bounded Borel function $f$ on $\Theta^{(1)}(B)$, and let $Z^{(2)}$ be the Ornstein-Uhlenbeck process taking values in $\Theta^{(1)}(B)$ with semigroup $(\Theta^{(1)})_{t>0}$ and initial law $\mu^{(1)}$. Iterating this construction yields, for any integer $r\geq 1$, an Ornstein-Uhlenbeck process $Z^{(r)}$ taking values in $\Theta^{(r-1)}(B)$. This process may also be viewed as an $r$-parameter process $Z^{(r)}=(Z^{(r)}_{\mathbf{t}})_{\mathbf{t}\in\mathbb{R}^r_+}$ taking values in $B$. Now \cite[\S 6, Th\'eor\`{e}me 1]{So93b} tells that a Borel set $A\subset B$ has zero $(r,2)$-capacity $\cpct_{r,2}(A)=0$ if and only if the event 
\[\left\lbrace \text{there exists some $\mathbf{t}\in\mathbb{R}_+^r$ such that $Z^{(r)}_{\mathbf{t}}\in A$}\right\rbrace\]
has probability zero. See also \cite[13. Corollary]{Ba94}.

Combined with Theorem \ref{T:Lp} this result gives a preliminary characterization of $L^2$-uniqueness (that is, essential self-adjointness) in terms of the hitting behaviour of the $2m$-parameter Ornstein-Uhlenbeck process $(X_{\mathbf{t}}^{(m)})_{\mathbf{t}\in \mathbb{R}_+^{2m}}$. 

\begin{cor} Let $m>0$ be an integer. Let $\Sigma\subset B$ be a closed set of zero measure and $N:=B\setminus \Sigma$. 
The operators $(-(-\mathcal{L})^m,\mathcal{F}C_b^\infty(N))$ and $(-(-\mathcal{L})^m,W^\infty(N))$ are $L^2$-unique (resp. essentially self-adjont) if and only if $Z^{(2m)}$ does not hit $\Sigma$ with positive probability.
\end{cor}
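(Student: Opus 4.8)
The plan is to assemble the corollary by chaining two equivalences that have already been prepared. First I would use Theorem \ref{T:Lp} in the special case $p=2$ to rewrite the operator-theoretic assertion as the vanishing of the Gaussian capacity $\cpct_{2m,2}(\Sigma)$. Second I would invoke the hitting characterization \cite[\S 6, Th\'eor\`{e}me 1]{So93b} (equivalently \cite[13. Corollary]{Ba94}) to rewrite $\cpct_{2m,2}(\Sigma)=0$ as the statement that the $2m$-parameter process $Z^{(2m)}$ does not hit $\Sigma$ with positive probability. Reading the two equivalences in succession yields the claim for both algebras simultaneously.

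For the first step, note that for $p=2$ the condition (\ref{E:condition}) reads $0<\frac{1}{m}$ and hence holds for every integer $m>0$. Theorem \ref{T:Lp}, applied with $p=2$ and this $m$, therefore gives that $(-(-\mathcal{L})^m,\mathcal{F}C_b^\infty(N))$ is $L^2$-unique if and only if $\cpct_{2m,2}(\Sigma)=0$, and the very same statement holds with $W^\infty(N)$ in place of $\mathcal{F}C_b^\infty(N)$, since Theorem \ref{T:Lp} covers both algebras. Because $-(-\mathcal{L})^m$ is a symmetric, semibounded operator on the dense domains $\mathcal{F}C_b^\infty(N)$ and $W^\infty(N)$, the equivalence of $L^2$-uniqueness and essential self-adjointness for such operators, \cite[Chapter I c), Corollary 1.2]{E}, shows that the parenthetical reformulation in terms of essential self-adjointness is equivalent to $L^2$-uniqueness; this is precisely what is needed to justify the ``resp.'' in the statement.

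For the second step, I would observe that $\Sigma$, being closed, is Borel, so that \cite[\S 6, Th\'eor\`{e}me 1]{So93b} is applicable with $r=2m$ and $A=\Sigma$. It asserts that $\cpct_{2m,2}(\Sigma)=0$ if and only if the event $\{\exists\,\mathbf{t}\in\mathbb{R}_+^{2m}\colon Z^{(2m)}_{\mathbf{t}}\in\Sigma\}$ has probability zero, that is, if and only if $Z^{(2m)}$ does not hit $\Sigma$ with positive probability. Composing this with the first equivalence completes the proof. Since both ingredients are already in place, there is no genuine analytic difficulty here; the only points requiring care are bookkeeping ones, namely verifying that the hypotheses of the two quoted results are met (closedness of $\Sigma$ supplies the Borel measurability demanded by \cite{So93b}, while density and symmetry on the two algebras underpin the essential self-adjointness reformulation) and keeping the index matching $r=2m$ consistent, so that the parameter count of the process $Z^{(2m)}$ matches the capacity order $2m$ attached to the $m$-th power $(-\mathcal{L})^m$.
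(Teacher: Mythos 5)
Your proposal is correct and follows exactly the route the paper intends: it combines Theorem \ref{T:Lp} with $p=2$ (where condition (\ref{E:condition}) is automatic) with the hitting-probability characterization of $\cpct_{2m,2}$-null sets from \cite[\S 6, Th\'eor\`{e}me 1]{So93b}, together with the standard equivalence of $L^2$-uniqueness and essential self-adjointness for semibounded symmetric operators from \cite{E}. The bookkeeping points you flag (closedness of $\Sigma$ giving Borel measurability, and the index matching $r=2m$) are exactly the right ones, so nothing is missing.
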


A more causal connection between uniqueness problems for operators and classical probability should involve certain branching diffusions rather than multiparameter processes, but even for finite dimensional Euclidean spaces the problem is not fully settled and remains a future project.

\end{document}